\newtheorem{thm}{Theorem}[section]
\newtheorem{conj}[thm]{Conjecture}
\newtheorem{lem}[thm]{Lemma}
\newtheorem{rem}[thm]{Remark}
\newtheorem{cor}[thm]{Corollary}
\newtheorem{pro}[thm]{Proposition}
\newenvironment {proof} {\noindent{\em Proof.}}{\hspace*{\fill}$\Box$\par\vspace{4mm}}
\newcommand{\ml}{l\kern-0.55mm\char39\kern-0.3mm}
\title{\textbf{Strong rainbow disconnection in graphs\footnote{Supported by NSFC No.11871034 and 11531011.}}}
\author{{\small Xuqing Bai, Xueliang Li } \\
{\small  Center for Combinatorics and LPMC}\\
{\small Nankai University, Tianjin 300071, China}\\
{\small Email: baixuqing0@163.com, lxl@nankai.edu.cn}\\
}
\date{}
\begin{document}
\maketitle
\begin{abstract}
Let $G$ be a nontrivial edge-colored connected graph.
An edge-cut $R$ of $G$ is called a {\it rainbow edge-cut}
if no two edges of $R$ are colored with the same color.
For two distinct vertices $u$ and $v$ of $G$, if an edge-cut
separates them, then the edge-cut is called a {\it $u$-$v$-edge-cut}.
An edge-colored graph $G$ is called \emph{strong rainbow
disconnected} if for every two distinct vertices
$u$ and $v$ of $G$, there exists a both rainbow and minimum $u$-$v$-edge-cut 
({\it rainbow minimum $u$-$v$-edge-cut} for short)
in $G$, separating them, and this edge-coloring is called a {\it strong
rainbow disconnection coloring} (srd-{\it coloring} for short) of $G$.
For a connected graph $G$, the \emph{strong rainbow disconnection number}
(srd-{\it number} for short) of $G$, denoted by $\textnormal{srd}(G)$, is 
the smallest number of colors that are needed in order to make $G$ strong rainbow disconnected.

In this paper, we first characterize the graphs with $m$ edges such that $\textnormal{srd}(G)=k$ 
for each $k \in \{1,2,m\}$, respectively, and we also show that the
srd-number of a nontrivial connected graph $G$ equals the maximum srd-number among the blocks of $G$.
Secondly, we study the srd-numbers for the complete $k$-partite graphs, $k$-edge-connected $k$-regular graphs and grid graphs.
Finally, we show that for a connected graph $G$, to compute $\textnormal{srd}(G)$ is NP-hard. 
In particular, we show that it is already NP-complete to decide if $\textnormal{srd}(G)=3$ 
for a connected cubic graph. Moreover, we show that for a given edge-colored (with
an unbounded number of colors) connected graph $G$ it is NP-complete to decide whether $G$ is 
strong rainbow disconnected.

\noindent\textbf{Keywords:} edge-coloring; edge-connectivity; strong rainbow disconnection number; complexity; NP-hard (complete)

\noindent\textbf{AMS subject classification 2010:} 05C15, 05C40, 68Q17, 68Q25, 68R10.
\end{abstract}

\section{Introduction}

All graphs considered in this paper are simple, finite and
undirected. Let $G=(V(G), E(G))$ be a nontrivial connected graph with 
vertex-set $V(G)$ and edge-set $E(G)$. For $v\in V(G)$, let
$d_G(v)$ and $N_G(v)$ denote the $degree$ and the $neighborhood$
of $v$ in $G$ (or simply $d(v)$ and $N(v)$, respectively, when
the graph $G$ is clear from the context). We use $\Delta(G)$ to denote 
the maximum degree of $G$. For any notation or terminology not defined here, 
we follow those used in \cite{BM}.

Let $G$ be a graph with an \emph{edge-coloring} $c$:
$E(G)\rightarrow [k]= \{1, 2,\cdots, k\}$, $k \in \mathbb{N}$, where
adjacent edges may be colored the same. When adjacent edges of $G$
receive different colors under $c$, the edge-coloring $c$ is called
\emph{proper}. The \emph{chromatic index} of $G$, denoted by $\chi'(G)$, 
is the minimum number of colors needed in a proper edge-coloring of $G$.
By a famous theorem of Vizing \cite{V}, one has that
$$\Delta(G) \leq \chi'(G) \leq \Delta(G)+1$$
for every nonempty graph $G$. If $\chi'(G) = \Delta(G)$, then
$G$ is said to be in \emph{Class} $1$; if $\chi'(G) = \Delta(G) + 1$, 
then $G$ is said to be in \emph{Class} $2$.

As we know that there are two ways to study the connectivity of a graph,
one way is by using paths and the other is by using cuts.
The rainbow connection using paths has been studied extensively; see
for examples, papers \cite{CJMZ, LSS, LS2} and book \cite{LS1} and the 
references therein. So, it is natural to consider the rainbow edge-cuts 
for the colored connectivity in edged-colored graphs. In \cite{CDHHZ}, 
Chartrand et al. first studied the rainbow edge-cut by introducing the 
concept of rainbow disconnection of graphs. In \cite{BL} we call all of 
them global colorings of graphs since they relate global structural  
property: connectivity of graphs.

An \emph{edge-cut} of a connected graph $G$ is a set $F$ of edges
such that $G-F$ is disconnected. The minimum number of edges in an 
edge-cut of $G$ is the \emph{edge-connectivity} of $G$, denoted by 
$\lambda(G)$. For two distinct vertices $u$ and $v$ of $G$,
let $\lambda_G(u,v)$ (or simply $\lambda(u,v)$ when the graph $G$ 
is clear from the context) denote the minimum number of edges in an 
edge-cut $F$ such that $u$ and $v$ lie in different components of $G-F$, and
this kind of edge-cut $F$ is called a {\it minimum $u$-$v$-edge-cut}.
A \emph{$u$-$v$-path} is a path with ends $u$ and $v$. The
following proposition presents an alternate interpretation of
$\lambda(u,v)$ (see \cite{EFS}, \cite{FF}).
\begin{pro}\label{M}
\emph{For every two distinct vertices $u$ and $v$ in a graph $G$,
\emph{$\lambda(u,v)$} is equal to the maximum number of pairwise
edge-disjoint $u$-$v$-paths in $G$}.
\end{pro}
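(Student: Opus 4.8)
The plan is to establish the equality by proving the two opposing inequalities between $\lambda(u,v)$ and the maximum number $p(u,v)$ of pairwise edge-disjoint $u$-$v$-paths. One direction is immediate: if $P_1,\dots,P_k$ are pairwise edge-disjoint $u$-$v$-paths and $F$ is any $u$-$v$-edge-cut, then deleting $F$ must break every $P_i$, so $F$ contains at least one edge of each $P_i$; since the $P_i$ are edge-disjoint these contributions are distinct, whence $|F|\ge k$. Taking $F$ to be a minimum $u$-$v$-edge-cut gives $\lambda(u,v)\ge p(u,v)$. The substance of the statement is the reverse inequality $\lambda(u,v)\le p(u,v)$, which asserts that a cut of size $\lambda(u,v)$ cannot exist unless that many edge-disjoint paths are already present.

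For the reverse inequality I would invoke the max-flow min-cut theorem of Ford and Fulkerson \cite{FF} (see also \cite{EFS}). First turn $G$ into a network $N$ by replacing each edge $xy$ with two oppositely directed arcs $\vec{xy}$ and $\vec{yx}$, each of capacity $1$, and declaring $u$ the source and $v$ the sink. The key translations are: (i) the minimum capacity of a $u$-$v$-cut in $N$ equals $\lambda(u,v)$, because a finite-capacity cut corresponds exactly to a set of edges of $G$ whose removal separates $u$ from $v$; and (ii) an integral $u$-$v$-flow of value $k$ in $N$ yields $k$ pairwise edge-disjoint $u$-$v$-paths in $G$ after flow decomposition. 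Since all capacities are integral, the Ford--Fulkerson theorem guarantees an integral maximum flow whose value equals the minimum cut capacity, namely $\lambda(u,v)$; decomposing it then produces $\lambda(u,v)$ edge-disjoint paths, giving $p(u,v)\ge\lambda(u,v)$.

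I expect the main obstacle to lie in step (ii), the flow-decomposition argument. An integral flow decomposes into directed $u$-$v$-paths together with circulations on directed cycles; the cycles may be discarded without changing the value, but one must also rule out two path-pieces simultaneously saturating both arcs $\vec{xy}$ and $\vec{yx}$ of a single undirected edge, since that would reuse the edge $xy$ and destroy edge-disjointness. This is handled by a cancellation step: whenever both arcs of an edge carry flow, subtract one unit from each, which preserves conservation and the value while strictly decreasing the total flow on arcs; hence a maximum integral flow may be assumed to use at most one arc of each edge. After this normalization the surviving directed paths are genuinely edge-disjoint in $G$, completing the argument.

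Finally, as an alternative I note that one could avoid flows altogether and argue by induction on $|E(G)|$ in the style of Menger's original proof, selecting an edge to delete or contract and tracking how $\lambda(u,v)$ changes across the cases; but the network formulation is cleaner and matches the cited sources \cite{EFS, FF}, so I would present that version.
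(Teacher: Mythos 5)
The paper does not prove Proposition \ref{M} at all; it states it as a known result and cites \cite{EFS} and \cite{FF}, i.e.\ exactly the max-flow min-cut sources your argument rests on. Your proof is correct and follows precisely that intended route: the counting argument for $\lambda(u,v)\ge p(u,v)$ is sound, and the reverse inequality via an integral maximum flow on the bidirected unit-capacity network, with the cancellation of antiparallel flow before decomposition, is the standard and complete way to derive the edge version of Menger's theorem.
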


An edge-cut $R$ of an edge-colored connected graph $G$ is called a 
\emph{rainbow edge-cut} if no two edges in $R$ are colored with the 
same color. Let $u$ and $v$ be two distinct vertices of $G$.
A \emph{rainbow $u$-$v$-edge-cut} is a rainbow edge-cut $R$ of $G$ 
such that $u$ and $v$ belong to different components of $G-R$.
An edge-colored graph $G$ is called \emph{rainbow~disconnected} if
for every two distinct vertices $u$ and $v$ of $G$, there exists a
rainbow $u$-$v$-edge-cut in $G$, separating them.
In this case, the edge-coloring is called a
\emph{rainbow disconnection coloring} (rd-{\it coloring} for short) of $G$.
The \emph{rainbow disconnection number} (or rd-\emph{number} for short) of
$G$, denoted by $\textnormal{rd}(G)$, is the smallest number of colors that
are needed in order to make $G$ rainbow disconnected.
A $\textnormal{rd}$-coloring with $\textnormal{rd}(G)$ colors is called an
$optimal$ rd-\emph{coloring} of $G$.

Remember that in the above Menger's famous result of Proposition \ref{M},
only minimum edge-cuts play a role,
however, in the definition of rd-colorings we only requested the existence
of a $u$-$v$-edge-cut between a pair of vertices $u$ and $v$, which could be
any edge-cut (large or small are both OK).
This may cause the failure of a colored version of such a nice Min-Max result of
Proposition \ref{M}. In order to overcome this problem, we will introduce
the concept of strong rainbow disconnection in graphs, with a hope to set
up the colored version of the so-called Max-Flow Min-Cut Theorem.

An edge-colored graph $G$ is called \emph{strong rainbow disconnected} if for
every two distinct vertices $u$ and $v$ of $G$, there exists a both rainbow
and minimum $u$-$v$-edge-cut ({\it rainbow minimum $u$-$v$-edge-cut} for short)
in $G$, separating them.
In this case, the edge-coloring is called a \emph{strong rainbow disconnection
coloring} (srd-{\it coloring} for short) of $G$.
For a connected graph $G$, we similarly define the
\emph{strong rainbow disconnection number}(srd-\emph{number} for short)
of $G$, denoted by $\textnormal{srd}(G)$, as the smallest number of colors
that are needed in order to make $G$ strong rainbow disconnected.
An $\textnormal{srd}(G)$-coloring with $\textnormal{srd}(G)$ colors is called
an $optimal$ srd-\emph{coloring} of $G$.

The remainder of this paper will be organized as follows.
In Section 2, we first obtain some basic results for the \textnormal{srd}-numbers of graphs.
In Section 3, we study the \textnormal{srd}-numbers for some well-known classes of special graphs.
In Section 4, we show that for a connected graph $G$, to compute $\textnormal{srd}(G)$ is NP-hard.
In particular, we show that it is already NP-complete to decide if $\textnormal{srd}(G)=3$ for a connected cubic graph.
Moreover, we show that for a given edge-colored (with an unbounded number of colors) connected graph $G$ it is NP-complete to decide whether $G$ is strong rainbow disconnected.

\section{Some basic results}

Let $G$ be a connected graph. Recall that for a pair of distinct
vertices $x$ and $y$ of $G$, we say that an edge-cut $\partial(X)$ \emph{separates} $x$ and $y$ if $x\in X$ and $y\in V \setminus X$.
We denote by $C_{G}(x,y)$ the minimum cardinality of
such an edge-cut in $G$. Let $X$ be a vertex subset of $G$, and let
$\overline{X}=V(G)\setminus X$.
Then the graph $G/X $ is obtained from $G$ by \emph{shrinking}
$X$ to a single vertex.
A \emph{trivial edge-cut} is one associated with a single vertex.
A \emph{block} of a graph is a maximal connected subgraph of $G$ containing no cut-vertices. The \emph{block decomposition} of $G$ is the set of blocks of $G$.
From definitions, the following inequalities are obvious.
\begin{pro}\label{srd-pro0}
If $G$ is a nontrivial connected graph with edge-connectivity
$\lambda (G)$, upper edge-connectivity $\lambda^+ (G)$ and number $e(G)$ of edges, then
$$\lambda(G)\leq \lambda^+(G)\leq \textnormal{rd}(G) \leq \textnormal{srd}(G)\leq e(G).$$
\end{pro}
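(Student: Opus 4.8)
The plan is to establish the chain of four inequalities one link at a time, each being a short consequence of the definitions; the only link requiring a genuine (though brief) argument is the middle one, $\lambda^+(G)\le\textnormal{rd}(G)$.

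First, recall that $\lambda(G)=\min\{\lambda(u,v): u,v\in V(G),\ u\neq v\}$ while the upper edge-connectivity is $\lambda^+(G)=\max\{\lambda(u,v): u,v\in V(G),\ u\neq v\}$. Thus $\lambda(G)\le\lambda^+(G)$ is immediate, since the minimum of a set cannot exceed its maximum.

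For $\lambda^+(G)\le\textnormal{rd}(G)$, I would fix a pair $u,v$ attaining $\lambda(u,v)=\lambda^+(G)$ and consider an arbitrary rd-coloring $c$. By the definition of an rd-coloring, $c$ yields a rainbow $u$-$v$-edge-cut $R$; since $R$ separates $u$ and $v$ we have $|R|\ge\lambda(u,v)=\lambda^+(G)$, and since $R$ is rainbow its $|R|$ edges carry $|R|$ pairwise distinct colors, so $c$ uses at least $\lambda^+(G)$ colors. As this holds for every rd-coloring, $\textnormal{rd}(G)\ge\lambda^+(G)$. This is the step where one must take care to select the extremal pair and to pass correctly from the size of the cut to the number of colors employed.

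The remaining two links are definitional. Every srd-coloring supplies, for each pair $u,v$, a rainbow minimum $u$-$v$-edge-cut, which is in particular a rainbow $u$-$v$-edge-cut; hence every srd-coloring is an rd-coloring, giving $\textnormal{rd}(G)\le\textnormal{srd}(G)$. Finally, coloring the $e(G)$ edges of $G$ with $e(G)$ pairwise distinct colors makes every edge-cut rainbow, so for each pair $u,v$ a minimum $u$-$v$-edge-cut (which exists by Proposition \ref{M}) is automatically rainbow; this is an srd-coloring, whence $\textnormal{srd}(G)\le e(G)$. Concatenating the four links yields the asserted chain.
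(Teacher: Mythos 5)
Your proof is correct, and it simply writes out in full the definitional chain that the paper dismisses with ``From definitions, the following inequalities are obvious'' (no proof is given there). Each of your four links, including the careful treatment of the extremal pair for $\lambda^+(G)\leq\textnormal{rd}(G)$, is exactly the intended argument.
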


Our first question is that the new parameter \textnormal{srd}-number is
really something new, different from \textnormal{rd}-number ?
However, we have not found any connected graph $G$ with
$\textnormal{srd}(G)\neq  \textnormal{rd}(G)$.
So, we pose the following conjecture.

\begin{conj} \label{P}
For any connected graph $G$, $\textnormal{srd}(G)=\textnormal{rd}(G)$.
\end{conj}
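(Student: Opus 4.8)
The plan is to prove the two inequalities separately. One direction, $\textnormal{rd}(G)\le \textnormal{srd}(G)$, is already recorded in Proposition \ref{srd-pro0}, since every rainbow minimum $u$-$v$-edge-cut is in particular a rainbow $u$-$v$-edge-cut, so any srd-coloring is automatically an rd-coloring. The entire content of the conjecture therefore lies in the reverse inequality $\textnormal{srd}(G)\le \textnormal{rd}(G)$, that is, in showing that $\textnormal{rd}(G)$ colors always suffice to produce, for every pair $u,v$, a rainbow cut that is simultaneously a \emph{minimum} $u$-$v$-edge-cut. A first reduction is to pass to blocks: since $\textnormal{srd}(G)=\max_B \textnormal{srd}(B)$ over the blocks $B$ of $G$ (as asserted for the srd-number in this paper), and the analogous reduction holds for the rd-number \cite{CDHHZ, BL}, it suffices to establish equality for $2$-connected graphs, where every minimum $u$-$v$-edge-cut is a bond and the cut structure is more rigid.

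For a $2$-connected $G$ I would try to upgrade an optimal rd-coloring $c$, using $\textnormal{rd}(G)$ colors, into an srd-coloring without introducing new colors. Fix a pair $u,v$ with $\lambda(u,v)=\lambda_{uv}$. The coloring $c$ furnishes \emph{some} rainbow $u$-$v$-edge-cut, but possibly of size larger than $\lambda_{uv}$. The natural tool is the submodularity of the cut function: the minimum $u$-$v$-edge-cuts correspond to a distributive lattice of vertex sets, with a unique smallest source side $X_{uv}\ni u$. The goal would be to show that $c$ can be chosen so that the canonical bond $\partial(X_{uv})$ is rainbow for every pair $u,v$ at once, that is, to exhibit a single coloring in which all these extremal minimum cuts are rainbow. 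As supporting evidence, and to locate where a general argument must bite, I would verify the statement directly on the families computed in Section 3 (complete $k$-partite graphs, $k$-edge-connected $k$-regular graphs, and grids): in each of these the values of $\textnormal{rd}$ and $\textnormal{srd}$ coincide, which rules out the easiest potential counterexamples and suggests the equality is governed by local degree and connectivity data rather than by global routing.

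The hard part, and the reason this remains a conjecture rather than a theorem, is precisely the mismatch between ``a rainbow cut exists'' and ``a rainbow minimum cut exists.'' An rd-coloring is free to certify the separation of $u$ and $v$ by a large but color-cheap cut, whereas the minimum cuts it is then forced to make rainbow may contain many edges (for instance when $\lambda(u,v)$ attains the upper edge-connectivity $\lambda^+(G)$), and there is no obvious local recoloring that converts a large rainbow cut into a minimum rainbow cut while respecting the budget of $\textnormal{rd}(G)$ colors. I expect that a proof, if the statement is true, will require a genuinely global recoloring scheme, or a colored duality theorem in the spirit of the Max-Flow Min-Cut result of Proposition \ref{M} that the authors aim to set up, guaranteeing that an optimal rd-coloring can always be arranged so that, simultaneously for all pairs, some minimum cut is rainbow. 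Establishing such a simultaneous rainbow property for all extremal minimum cuts is the main obstacle, and one should keep open the possibility that the resolution instead yields a counterexample on a suitably intricate $2$-connected graph.
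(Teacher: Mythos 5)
The statement you were asked to prove is posed in the paper as an open \emph{conjecture}: the authors explicitly state that they have not found any graph with $\textnormal{srd}(G)\neq\textnormal{rd}(G)$ but offer no proof, only verification on the families treated in Sections 2 and 3. Your submission is consistent with that status, but it is therefore not a proof. What you establish correctly is only the easy inequality $\textnormal{rd}(G)\le\textnormal{srd}(G)$ (which is already Proposition \ref{srd-pro0}) and the reduction to $2$-connected blocks via Theorem \ref{srd-thm-block} and its rd-analogue. The entire substance of the conjecture, namely $\textnormal{srd}(G)\le\textnormal{rd}(G)$, is left as a stated ``goal'': you propose to show that an optimal rd-coloring can be arranged so that, for every pair $u,v$, the canonical minimum cut $\partial(X_{uv})$ from the submodular lattice structure is rainbow, but you give no argument that such a simultaneous arrangement exists, no recoloring procedure, and no invariant that would be preserved by one. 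Indeed you concede in your last paragraph that this is ``the main obstacle'' and that a counterexample remains possible. A plan that names the missing lemma is not a proof of it.

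Concretely, the gap is this: an rd-coloring only guarantees, for each pair $u,v$, \emph{some} rainbow $u$-$v$-edge-cut, which may be much larger than $\lambda(u,v)$ while using few colors, and nothing in your argument forces any minimum $u$-$v$-edge-cut to be rainbow under that coloring or under any recoloring with the same number of colors. Your appeal to the distributive lattice of minimum cuts identifies a natural candidate family of cuts to make rainbow, but you would still need to show these bonds can all be made rainbow with only $\textnormal{rd}(G)$ colors, and no such argument is supplied. Since the paper itself leaves this open (and supports it only through the special classes where both parameters are computed exactly), your submission should be read as a restatement of the problem together with the known partial reductions, not as a resolution of Conjecture \ref{P}.
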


In the rest of the paper we will show that for many classes of graphs
the conjecture is true.

In this section, we characterize all those nontrivial connected graphs
with $m$ edges such that $\textnormal{srd}(G)=k$ for each $k \in \{1,2,m\}$, respectively. We first characterize the graphs with $\textnormal{srd}(G)=m$.
The following are two lemmas which we will be used.

\begin{lem}{\upshape \cite{GH}}\label{srd-lem21}
Let $\partial(X)$ be a minimum edge-cut in a graph $G$ separating two vertices $x$ and $y$, where $x\in X$,
and let $\partial(Y)$ be a minimum edge-cut in $G$ separating two vertices $u$ and $v$ of $X$ ($\overline X$), where $y\in Y$.
Then every minimum $u$-$v$-edge-cut in $G/\overline X$ ($G/X$) is a minimum $u$-$v$-edge-cut in $G$.
\end{lem}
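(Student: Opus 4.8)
The plan is to prove the statement by the standard "uncrossing" argument for the edge-cut function, exploiting its submodularity and symmetry; this is exactly the combinatorial core of the Gomory--Hu tree, so I would follow that route. Throughout, write $d(S):=|\partial(S)|$ for the number of edges of $G$ with exactly one end in $S$. The function $d$ is symmetric, $d(S)=d(\overline S)$, and submodular, hence it also satisfies the posimodular inequality
$$d(A)+d(B)\ \ge\ d(A\setminus B)+d(B\setminus A)\qquad\text{for all }A,B\subseteq V(G),$$
which I would obtain by applying submodularity to $A$ and $\overline B$ and then using $d(\overline B)=d(B)$. I would prove only the main version ($u,v\in X$, shrinking $\overline X$); the parenthetical version follows verbatim after interchanging the roles of $X$ and $\overline X$ and of $x$ and $y$.

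The heart of the matter is to show $\lambda_{G/\overline X}(u,v)=\lambda_G(u,v)$, i.e.\ that some minimum $u$-$v$-edge-cut of $G$ can be chosen so as not to split $\overline X$. Every $u$-$v$-edge-cut of $G/\overline X$ lifts (retaining the parallel edges created by shrinking $\overline X$) to a $u$-$v$-edge-cut of $G$ with the same edges, so $\lambda_{G/\overline X}(u,v)\ge \lambda_G(u,v)$ is immediate, and the work is the reverse inequality. Take the given minimum $u$-$v$-edge-cut $\partial(Y)$, and say $u\in Y$, $v\in\overline Y$ (otherwise swap the names $u,v$). By hypothesis $y\in Y$, and since $x\in X$ forces $y\in\overline X$, we have $y\in Y\setminus X$. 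I would then track the relevant vertices: $u\in X\cap Y$, $v\in X\setminus Y$, and $y\in Y\setminus X$. The two key observations are that $Y\setminus X$ separates $x$ from $y$ (as $y\in Y\setminus X\subseteq\overline X$ while $x\in X$), and that $X\setminus Y$ separates $u$ from $v$ (as $v\in X\setminus Y$ while $u\in Y$ gives $u\notin X\setminus Y$).

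Feeding these into posimodularity gives
$$d(X)+d(Y)\ \ge\ d(X\setminus Y)+d(Y\setminus X)\ \ge\ \lambda_G(u,v)+\lambda_G(x,y)\ =\ d(Y)+d(X),$$
where the middle inequality uses the two separation facts together with the minimality of $\partial(Y)$ and of $\partial(X)$. Hence equality holds throughout, so $d(X\setminus Y)=\lambda_G(u,v)$; that is, $\partial(X\setminus Y)$ is a minimum $u$-$v$-edge-cut of $G$. Since $X\setminus Y\subseteq X$, its complement contains all of $\overline X$, so this cut does not split $\overline X$ and therefore survives in $G/\overline X$, giving $\lambda_{G/\overline X}(u,v)\le\lambda_G(u,v)$ and hence equality. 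Consequently any minimum $u$-$v$-edge-cut of $G/\overline X$ has exactly $\lambda_G(u,v)$ edges and is itself a $u$-$v$-edge-cut of $G$, so it is a minimum $u$-$v$-edge-cut of $G$, as claimed.

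I expect the only genuine obstacle to be bookkeeping: choosing the correct one of the four uncrossed sets ($X\cap Y$, $X\cup Y$, $X\setminus Y$, $Y\setminus X$) so that one of them simultaneously separates $u,v$ and keeps $\overline X$ on a single side while its partner separates $x,y$. The hypothesis $y\in Y$ is precisely what pins down the orientation making $X\setminus Y$ and $Y\setminus X$ the right pair. A secondary point to state carefully is the edge-count correspondence under contraction, which guarantees that cut values are preserved between $G$ and $G/\overline X$.
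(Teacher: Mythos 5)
Your proof is correct. Note that the paper itself gives no proof of this lemma at all: it is quoted as a known result from Gomory and Hu \cite{GH}, so there is no in-paper argument to compare against. Your uncrossing argument via submodularity/posimodularity of $d(\cdot)$ is the standard (essentially the original) proof of this fact: the hypothesis $y\in Y$ correctly pins down $X\setminus Y$ and $Y\setminus X$ as the right uncrossed pair, the equality chain forces $\partial(X\setminus Y)$ to be a minimum $u$-$v$-edge-cut not splitting $\overline X$, and the contraction bookkeeping (cuts of $G/\overline X$ lift to cuts of $G$ of the same size, and cuts with $\overline X$ on one side survive contraction) is handled correctly, so the proof is complete as written.
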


It follows from Lemma \ref{srd-lem21} that we have the following result.

\begin{lem}\label{srd-lem-ub}
Let $G$ be a connected graph of order at least $3$. Then $\textnormal{srd}(G)\leq e(G)-1$.
\end{lem}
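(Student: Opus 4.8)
The plan is to start from the rainbow coloring, which assigns $m:=e(G)$ distinct colors and is trivially an srd-coloring (every edge-cut, in particular every minimum $u$-$v$-edge-cut, is rainbow), and to save exactly one color by giving a common color to two carefully chosen edges $e_1,e_2$ while keeping the remaining $m-2$ edges on distinct colors, for a total of $m-1$ colors. Under such a coloring a minimum $u$-$v$-edge-cut fails to be rainbow precisely when it contains both $e_1$ and $e_2$; hence it suffices to choose $e_1,e_2$ so that for every pair $\{u,v\}$ some minimum $u$-$v$-edge-cut omits at least one of them. Calling an edge $e$ \emph{essential} for $\{u,v\}$ when it lies in every minimum $u$-$v$-edge-cut, the whole task reduces to finding two edges that are never simultaneously essential for a single pair.

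First I would dispose of the case where $G$ has a cut-vertex. Let $B$ be a block of $G$ with the largest edge-number; since $G$ then has at least two blocks, $e(B)\le m-1$. Using Lemma~\ref{srd-lem21} I would argue that minimum cuts localize to blocks: for $u,v$ in a common block, a minimum $u$-$v$-edge-cut may be taken inside that block (contracting the rest), and for $u,v$ in different blocks a minimum $u$-$v$-edge-cut may be taken inside a single block, separating one of them from the nearest cut-vertex. Consequently a single palette of $\max_B e(B)\le m-1$ colors can be reused block-by-block, yielding an srd-coloring, and this already gives the bound whenever $G$ is not $2$-connected.

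It remains to treat a $2$-connected graph $G$ with $n\ge 3$. If $\Delta(G)=2$ then $G$ is a cycle $C_n$, where every pair has many minimum ($2$-edge) cuts and none has a unique one, so merging the colors of any two edges is safe and $\textnormal{srd}(C_n)\le n-1$. Otherwise pick a vertex $v$ with $d(v)=\Delta\ge 3$ and set $e_1=vx$, $e_2=vy$. To verify safety I would take any pair $\{p,q\}$ and exhibit a minimum $p$-$q$-edge-cut avoiding $e_1$ or $e_2$: when $v\notin\{p,q\}$ I use a source- or sink-side minimal minimum cut together with Lemma~\ref{srd-lem21} to force at least one of $x,y$ onto $v$'s side; when $v\in\{p,q\}$, say $p=v$ with $\partial(\{v\})$ minimum, then $d(q)\ge\Delta$, so the disjoint cut $\partial(\{q\})$ is also minimum and contains at most one of $e_1,e_2$, while the complementary sub-case $\lambda(v,q)<\Delta$ is handled by the localized minimum cut provided by Lemma~\ref{srd-lem21}.

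The main obstacle is precisely this last $2$-connected analysis: proving that at a maximum-degree vertex no pair makes both $e_1$ and $e_2$ essential. The choice of $v$ is delicate, because merging the two edges at an \emph{arbitrary} vertex of degree $\ge 2$ can fail—a degree-$2$ vertex $w$ may have $\partial(\{w\})$ as the \emph{unique} minimum $w$-$q$-edge-cut for some $q$, forcing both of its edges to be essential. Taking $v$ of maximum degree rules this pathology out (if $\partial(\{v\})$ is minimum for $\{v,q\}$ then $d(q)\ge\Delta$, so the vertex-cut at $q$ serves as the required alternative), and Lemma~\ref{srd-lem21} supplies the alternative minimum cut in the low-connectivity sub-case. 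Establishing that these alternatives exist uniformly over all pairs is the crux; the delicate point is the sub-case $p=v$ with $\lambda(v,q)<\Delta$, where one must argue via the minimum-cut lattice that the sink-side minimal cut places some neighbour of $v$ on $v$'s side.
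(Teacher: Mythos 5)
Your reduction to ``find two edges that are never simultaneously essential'' is a legitimate reformulation, and the block-decomposition and cycle cases are fine. But the heart of your argument --- that for a maximum-degree vertex $v$ and two edges $e_1=vx$, $e_2=vy$ at $v$, every pair $\{p,q\}$ admits a minimum $p$-$q$-edge-cut omitting $e_1$ or $e_2$ --- is exactly the step you concede you have not proved, and as stated it is false. Take $V(G)=\{v,w,x,y,z,u,q\}$ with $E(G)=\{vx,vy,vz,vw,zw,zq,xq,yq,xu,yu,uq\}$. This graph is $2$-connected, $d(v)=d(q)=4=\Delta(G)$, and $\lambda(v,q)=3$ (three edge-disjoint paths $vxq$, $vyq$, $vzq$). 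Any minimum $v$-$q$-edge-cut must take exactly one edge from each of these three paths and nothing else; every choice containing $xq$ is defeated by the surviving path $vxuq$, every choice containing $yq$ by $vyuq$, and every choice containing $vz$ by $vwzq$. Hence $\{vx,vy,zq\}$ is the \emph{unique} minimum $v$-$q$-edge-cut, and it contains both $vx$ and $vy$ --- two edges at the maximum-degree vertex. So identifying the colors of $vx$ and $vy$ destroys strong rainbow disconnection for the pair $\{v,q\}$, and the ``pathology'' you hoped maximum degree would exclude persists. Your sketch for the sub-case $\lambda(v,q)<\Delta$ (``the sink-side minimal cut places some neighbour of $v$ on $v$'s side'') is true here ($z,w$ lie on $v$'s side) but does not help, because the neighbours forced to $v$'s side need not be the endpoints of the two edges you merged; you give no criterion for choosing $x,y$ and no proof that a good choice exists.

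The paper avoids this difficulty entirely by a different construction. If some pair of vertices has a nontrivial minimum edge-cut, it takes a smallest such cut $\partial(X)$, colors $G[X]$ and $G[\overline X]$ rainbow \emph{with the same palette} $[\max\{e(G[X]),e(G[\overline X])\}]$, and gives $\partial(X)$ fresh colors; since both sides contain at least one edge, this uses at most $e(G)-1$ colors, and Lemma~\ref{srd-lem21} guarantees that minimum cuts for pairs inside one side can be found inside the corresponding contracted graph, where the coloring is rainbow. If every minimum cut of every pair is trivial, a proper edge-coloring suffices because some vertex star is then always a rainbow minimum cut. If you want to salvage your one-identification strategy, you would need to prove the existence of \emph{some} compatible pair of edges (not necessarily at a common vertex); that existence in fact follows from the lemma itself, so it cannot be easier to establish directly than the lemma is.
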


\begin{proof}
We distinguish the following two cases.

\textbf{Case 1}. There exists at least one pair of vertices having nontrivial minimum edge-cut.

Let $C_G(x,y)$ be a nontrivial minimum $u$-$v$-edge-cut of $G$, where $x,y\in V(G)$, and let $\partial(X)=\min\{C_G(x,y)|x,y\in V(G)\}$.
Suppose that $\partial(X)$ is a nontrivial minimum $x_0$-$y_0$-edge-cut in graph $G$, where $x_0\in X$, and let $\partial(Y)$ be a minimum $u$-$v$-edge-cut in $G$, where $u,v\in X$ and $y_0\in Y$.
By Lemma \ref{srd-lem21}, we get that every minimum $u$-$v$-edge-cut in $G/\overline{X}$ is a minimum $u$-$v$-edge-cut in $G$.
Now we give an edge-coloring $c$ for $G$ by assigning different colors for each edge of $G[X]$ using colors from $[e(G[X])]$ and assigning different colors for each edge of $G[\overline X]$ using colors from $[e(G[\overline X])]$, respectively, and assigning $|\partial(X)|$ new colors for $\partial(X)$.
Note that the set $E_w$ of edges incident with $w$ is rainbow for each vertex $w$ of $G$, and
$|c|=\max\{e(G[X]),e(G[\overline X])\}+|\partial(X)|\leq e(G)-1$ since $e(G[X]),e(G[\overline X])\geq 1$.

We can verify that the coloring $c$ is an srd-coloring of $G$.
Let $p$ and $q$ be two vertices of $G$. If $p$ and $q$ have a nontrivial
minimum edge-cut $C_G(p,q)$ in $G$, then $|C_G(p,q)|\geq |\partial(X)|$.
Suppose that $p\in X$ and $q\in \overline{X}$. Without loss of generality, let $d(p)\leq d(q)$.
If $d(p)<|\partial(X)|$, then the set $E_p$ of edges incident with $p$ is a rainbow minimum $p$-$q$-edge-cut in $G$ under the coloring $c$; if $|\partial(X)|\leq d(p)\leq d(q)$, then the $\partial(X)$ is a rainbow minimum $p$-$q$-edge-cut in $G$ under the coloring $c$.
If $p,q\in X$ ($\overline{X}$), then the minimum $p$-$q$-edge-cut in $G/\overline{X}$ ($G/X$)
is a rainbow minimum $p$-$q$-edge-cut in $G$ since the colors of the edges in
graph $G/\overline{X}$ ($G/X$) are different from each other under the
restriction of coloring $c$.

\textbf{Case 2}. For any two vertices of $G$, there are only trivial minimum edge-cut.

If $G$ is a tree, then $\textnormal{srd}(G)=1$. Obviously, $\textnormal{srd}(G)\leq e(G)-1$ since $G$ is a connected graph with $n\geq 3$. Otherwise,
we give a proper edge-coloring for $G$ using $n-1$ colors. Since $G$ is not a tree, we have $n-1\leq e(G)-1$. For any two vertices $p$, $q$ of $G$, without loss of generality, let $d(p)\leq d(q)$, the set $E_p$ of edges incident with $p$ is a rainbow minimum $p$-$q$-edge-cut in $G$.
\end{proof}

By Lemma \ref{srd-lem-ub}, we immediately obtain the following result.
\begin{cor}
Let $G$ be a connected graph. Then $\textnormal{srd}(G)=e(G)$ if and only if $G=P_2$.
\end{cor}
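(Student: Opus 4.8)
The plan is to prove the two directions of the biconditional separately, with the bulk of the work already supplied by Lemma~\ref{srd-lem-ub}. Throughout I treat $G$ as a nontrivial connected graph, as assumed in the paper.

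For the direction $G=P_2 \Rightarrow \textnormal{srd}(G)=e(G)$, the argument is immediate. Here $e(P_2)=1$, and since $P_2$ is nontrivial we need at least one color, so $\textnormal{srd}(P_2)\geq 1$. On the other hand, Proposition~\ref{srd-pro0} gives $\textnormal{srd}(G)\leq e(G)=1$. Combining these yields $\textnormal{srd}(P_2)=1=e(P_2)$, as desired.

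For the converse, I would argue by contrapositive: assuming $G\neq P_2$, I want to show $\textnormal{srd}(G)<e(G)$. The key observation is a case split on the order of $G$. Since $G$ is a simple connected nontrivial graph, the only such graph on exactly two vertices is $P_2$; hence $G\neq P_2$ forces $G$ to have order at least $3$. Lemma~\ref{srd-lem-ub} then applies directly and gives $\textnormal{srd}(G)\leq e(G)-1<e(G)$. Contraposing, $\textnormal{srd}(G)=e(G)$ implies $G=P_2$, which completes this direction.

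I do not expect a genuine obstacle here, since the corollary is essentially a repackaging of Lemma~\ref{srd-lem-ub}: the entire content lies in recognizing that the hypothesis ``order at least $3$'' of that lemma fails only for $G=P_2$ among nontrivial connected graphs, so the single excluded case is exactly the one achieving equality. The only point requiring care is to verify that no connected nontrivial graph of order $2$ other than $P_2$ exists (true by simplicity), so that the dichotomy between ``$G=P_2$'' and ``order at least $3$'' is exhaustive.
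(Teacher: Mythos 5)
Your proposal is correct and matches the paper's intent exactly: the paper derives this corollary immediately from Lemma~\ref{srd-lem-ub} with no further argument, and your write-up simply makes explicit the two routine observations (that $P_2$ is the only nontrivial connected graph of order $2$, and that $\textnormal{srd}(P_2)=1=e(P_2)$) that the paper leaves implicit. No issues.
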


Next, we further characterize the graphs $G$ with $\textnormal{srd}(G)=1$ and $2$, respectively.
We first restate two results as lemmas which characterize the graphs with $\textnormal{rd}(G)=1$ and $2$, respectively.
\begin{lem}{\upshape \cite{CDHHZ}} \label{srd-lem12}
Let $G$ be a nontrivial connected graph. Then $\textnormal{rd}(G)=1$ if and only if $G$ is a tree.
\end{lem}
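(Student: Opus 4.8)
The plan is to prove the biconditional in both directions, with the pivotal observation being what a \emph{rainbow} edge-cut can look like when only one color is available: since every edge receives the same color, an edge-cut is rainbow if and only if it consists of a single edge. Thus $\textnormal{rd}(G)=1$ is equivalent to the statement that for every pair of distinct vertices $u,v$ there is a single edge whose removal separates them, i.e. a bridge separating $u$ and $v$. This reformulation is what makes both implications transparent, so I would state it at the outset and then verify each direction.

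For the forward direction, suppose $G$ is a tree. Color every edge of $G$ with the single color $1$. Given distinct vertices $u$ and $v$, let $e$ be any edge on the unique $u$-$v$-path in $G$; since $G$ is a tree, $e$ is a bridge and $G-e$ separates $u$ from $v$. The one-element set $\{e\}$ is trivially rainbow, so it is a rainbow $u$-$v$-edge-cut, and hence this coloring is an rd-coloring. Therefore $\textnormal{rd}(G)\le 1$, and since at least one color is always needed for a nontrivial connected graph, $\textnormal{rd}(G)=1$.

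For the reverse direction, suppose $\textnormal{rd}(G)=1$ and fix an optimal rd-coloring using the single color $1$. For any two distinct vertices $u,v$ there is a rainbow $u$-$v$-edge-cut $R$; because all edges share one color, $R$ is rainbow only if $|R|\le 1$, and since $R$ separates $u$ and $v$ we must have $|R|=1$. Hence every pair of vertices is separated by some single edge, so $\lambda(u,v)=1$ for all distinct $u,v$ (equivalently $\lambda^{+}(G)=1$, consistent with Proposition \ref{srd-pro0}). By Proposition \ref{M}, $\lambda(u,v)$ equals the maximum number of pairwise edge-disjoint $u$-$v$-paths, so no two vertices admit two edge-disjoint $u$-$v$-paths. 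If $G$ contained a cycle $C$, then any two distinct vertices on $C$ would be joined by the two arcs of $C$, which are edge-disjoint $u$-$v$-paths, giving $\lambda(u,v)\ge 2$ and contradicting the above. Hence $G$ is acyclic, and being connected it is a tree.

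I do not anticipate a serious obstacle here; the entire argument hinges on the single translation from ``rainbow under one color'' to ``a single edge,'' after which the forward direction is immediate and the reverse direction reduces to the standard fact, supplied by Proposition \ref{M}, that a cycle forces two edge-disjoint paths between a pair of its vertices. The only point requiring a little care is to phrase the reverse direction so that the single-edge cut is genuinely forced for \emph{every} pair of vertices, since it is this universal condition—not just edge-connectivity $\lambda(G)=1$—that rules out all cycles.
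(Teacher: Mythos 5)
Your proof is correct. Note that the paper does not prove this lemma at all --- it is quoted from the reference [CDHHZ] (Chartrand et al., \emph{Rainbow disconnection in graphs}) without proof --- so there is no in-paper argument to compare against. Your argument is the standard one and is sound: the key translation ``rainbow under a single color $\Leftrightarrow$ a one-edge cut'' gives the forward direction immediately via bridges of a tree, and the reverse direction correctly uses the two edge-disjoint arcs of any cycle (via Proposition \ref{M}, or just directly) to rule out cycles when every pair of vertices must be separated by a single edge.
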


\begin{lem}{\upshape \cite{CDHHZ}}\label{srd-lem11}
Let $G$ be a nontrivial connected graph. Then $\textnormal{rd}(G)=2$ if and only if each block of $G$ is either $K_2$ or a cycle and at least one block of $G$ is a cycle.
\end{lem}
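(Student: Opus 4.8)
The plan is to prove both implications, relying on the lower bound $\textnormal{rd}(G)\ge\lambda^+(G)$ from Proposition \ref{srd-pro0} for the necessity and on an explicit two-coloring for the sufficiency. Throughout I would use the characterization of blocks: each block of $G$ is either a single edge $K_2$ or a $2$-connected subgraph.

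For the necessity, I would first invoke Lemma \ref{srd-lem12}: since $\textnormal{rd}(G)=2\neq 1$, the graph $G$ is not a tree, so at least one block of $G$ is not a $K_2$ and is therefore $2$-connected. It then remains to rule out any $2$-connected block that is not a cycle. The key point is that a $2$-connected graph is a cycle exactly when it has a single ear in its open ear decomposition; hence a $2$-connected block $B$ that is not a cycle admits a decomposition with at least one further ear, and adding that ear to the initial cycle produces a theta-subgraph, i.e. two vertices $x,y$ joined by three internally disjoint (hence edge-disjoint) paths. By Proposition \ref{M} this gives $\lambda_B(x,y)\ge 3$, and since edge-disjoint $x$-$y$-paths of $B$ remain edge-disjoint in $G$ we obtain $\lambda_G(x,y)\ge 3$, so $\lambda^+(G)\ge 3$. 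Proposition \ref{srd-pro0} then forces $\textnormal{rd}(G)\ge\lambda^+(G)\ge 3$, a contradiction. Thus every block of $G$ is $K_2$ or a cycle, and at least one of them is a cycle.

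For the sufficiency, assume each block is $K_2$ or a cycle with at least one cycle block; then $G$ is not a tree, so $\textnormal{rd}(G)\ge 2$ by Lemma \ref{srd-lem12}. I would exhibit a $2$-coloring witnessing $\textnormal{rd}(G)\le 2$: in every cycle block color exactly one edge with color $2$, and give every remaining edge of $G$ (including all bridges) color $1$. To check this is an rd-coloring, take distinct vertices $u,v$ and follow the path of the block-cut tree joining the block(s) containing them; choose a block $B$ met by this path together with the two vertices $a,b\in V(B)$ at which the path enters and leaves $B$ (taking $a=u$ or $b=v$ when $u$ or $v$ lies in $B$). Because distinct blocks meet only at cut vertices, any edge-cut separating $a$ from $b$ inside $B$ separates $u$ from $v$ in $G$. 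If $B=K_2$, its single edge is a rainbow $u$-$v$-edge-cut; if $B$ is a cycle, then $a,b$ split it into two arcs, the unique color-$2$ edge lies in one arc, and picking it together with any (color-$1$) edge of the other arc yields a rainbow cut of size $2$ separating $a$ and $b$. Hence $\textnormal{rd}(G)\le 2$, and combined with the lower bound we conclude $\textnormal{rd}(G)=2$.

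The main obstacle I anticipate is the structural claim used in the necessity direction, namely that a $2$-connected graph which is not a cycle always contains two vertices joined by three edge-disjoint paths; I would justify this carefully through the ear decomposition as sketched above, where the first ear beyond the initial cycle already creates the theta-subgraph. A secondary point requiring care in the sufficiency direction is verifying that the size-$2$ cut chosen inside a cycle block genuinely disconnects $u$ from $v$ in all of $G$, which again rests on the fact that the two sides of the block-cut tree communicate only through the cut vertices of $B$.
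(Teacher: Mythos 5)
The paper does not actually prove this lemma: it is imported verbatim from \cite{CDHHZ} (Chartrand et al.) as a known characterization, so there is no in-paper argument to compare against. Judged on its own, your proof is correct and self-contained. The necessity direction is sound: a $2$-connected block that is not a cycle has an open ear decomposition with at least one ear beyond the initial cycle, and the initial cycle together with the first such ear is a theta-subgraph, giving two vertices $x,y$ with three pairwise edge-disjoint $x$-$y$-paths; Proposition \ref{M} then yields $\lambda_G(x,y)\geq 3$, and Proposition \ref{srd-pro0} forces $\textnormal{rd}(G)\geq \lambda^{+}(G)\geq 3$. Combined with Lemma \ref{srd-lem12} (so that $G$ is not a tree and hence has a $2$-connected block), this gives exactly the stated block structure. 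The sufficiency direction is the same coloring the present paper reuses in Theorem \ref{srd-thm-equ2} (one distinguished edge per cycle block), and your verification correctly isolates the two standard facts that need checking: that one edge from each of the two $a$-$b$-arcs of a cycle disconnects $a$ from $b$, and that an $a$-$b$-edge-cut inside a block separates $u$ from $v$ in all of $G$ because excursions out of a block return through the same cut vertex. No gaps.
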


Furthermore, we obtain the following two results.
\begin{thm}\label{srd-thm-equ1}
Let $G$ be a nontrivial connected graph.
Then $\textnormal{srd}(G)=1$ if and only if $\textnormal{rd}(G)=1$.
\end{thm}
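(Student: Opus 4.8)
The plan is to leverage the chain of inequalities in Proposition \ref{srd-pro0} together with the known characterization of $\textnormal{rd}(G)=1$ in Lemma \ref{srd-lem12}, so that the whole statement reduces to verifying directly that a tree is strong rainbow disconnected with a single color.

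For the forward direction, suppose $\textnormal{srd}(G)=1$. Since $G$ is a nontrivial connected graph, it has at least one edge, so $\lambda(G)\geq 1$. By Proposition \ref{srd-pro0} we then have $1\leq \lambda(G)\leq \textnormal{rd}(G)\leq \textnormal{srd}(G)=1$, which forces $\textnormal{rd}(G)=1$. This direction needs nothing beyond the basic inequalities already established.

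For the backward direction, suppose $\textnormal{rd}(G)=1$. By Lemma \ref{srd-lem12}, $G$ must be a tree. I would then exhibit an srd-coloring using exactly one color, namely assign color $1$ to every edge of $G$. To check this is an srd-coloring, take any two distinct vertices $u$ and $v$. Since $G$ is a tree, there is a unique $u$-$v$-path, so by Proposition \ref{M} the maximum number of pairwise edge-disjoint $u$-$v$-paths is $1$, i.e. $\lambda(u,v)=1$. Hence any single edge lying on the unique $u$-$v$-path is a minimum $u$-$v$-edge-cut, and a one-element edge set is vacuously rainbow. Thus every pair of vertices admits a rainbow minimum $u$-$v$-edge-cut, so $\textnormal{srd}(G)\leq 1$; combined with $\textnormal{srd}(G)\geq \lambda(G)\geq 1$, this gives $\textnormal{srd}(G)=1$.

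There is essentially no serious obstacle here: the content is carried entirely by Proposition \ref{srd-pro0} and Lemma \ref{srd-lem12}. The only points demanding care are in the backward direction, where one confirms that a single edge of the $u$-$v$-path really is a minimum cut (an immediate consequence of Menger's result in Proposition \ref{M}) and that a singleton edge-cut is automatically rainbow.
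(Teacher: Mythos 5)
Your proposal is correct and follows essentially the same route as the paper: the forward direction via the inequality chain in Proposition \ref{srd-pro0}, and the backward direction by reducing to a tree via Lemma \ref{srd-lem12} and coloring all edges with one color. The only difference is that you spell out the verification (via Proposition \ref{M}) that a single edge on the unique path is a rainbow minimum cut, which the paper dismisses as obvious.
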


\begin{proof}
First, if $\textnormal{srd}(G)=1$, then we have $1\leq \textnormal{rd}(G)\leq \textnormal{srd}(G)$ by Proposition \ref{srd-pro0}.
Next, if $\textnormal{rd}(G)=1$, then the graph $G$ has no cycle, namely, the $G$ is a tree. We give one color for all edges of $G$.
Obviously, the coloring is an optimal srd-coloring of $G$, and so $\textnormal{srd}(G)=1$ by Proposition \ref{srd-pro0}.
\end{proof}

\begin{thm}\label{srd-thm-equ2}
Let $G$ be a nontrivial connected graph.
Then $\textnormal{srd}(G)=2$ if and only if $\textnormal{rd}(G)=2$.
\end{thm}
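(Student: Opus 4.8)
The plan is to prove the two implications separately, using the already-established equivalence at the value $1$ (Theorem \ref{srd-thm-equ1}) as a stepping stone for the easy direction and the structural description of Lemma \ref{srd-lem11} for the hard direction. For the direction $\textnormal{srd}(G)=2\Rightarrow\textnormal{rd}(G)=2$, I would first invoke Proposition \ref{srd-pro0} to get $\textnormal{rd}(G)\le\textnormal{srd}(G)=2$. Since $G$ is nontrivial we have $\textnormal{rd}(G)\ge 1$, and if $\textnormal{rd}(G)$ were equal to $1$ then Theorem \ref{srd-thm-equ1} would force $\textnormal{srd}(G)=1\ne 2$, a contradiction. Hence $\textnormal{rd}(G)=2$. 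This direction requires no new construction.

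For the converse $\textnormal{rd}(G)=2\Rightarrow\textnormal{srd}(G)=2$, the lower bound $\textnormal{srd}(G)\ge\textnormal{rd}(G)=2$ is again immediate from Proposition \ref{srd-pro0}, so the entire content is to exhibit an srd-coloring using only $2$ colors. I would start from Lemma \ref{srd-lem11}: every block of $G$ is a $K_2$ or a cycle, and at least one block is a cycle. The coloring I propose is to assign color $2$ to exactly one edge of each cyclic block and color $1$ to every other edge of $G$. Since each cyclic block has at least three edges, it then carries at least one edge of each color, while every bridge receives color $1$.

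I would then verify that this coloring is an srd-coloring by checking, for an arbitrary pair of distinct vertices $u,v$, that a rainbow minimum $u$-$v$-edge-cut exists. Using the blocks and cut-vertices along the $u$-$v$ route there are two cases. If some bridge separates $u$ and $v$, then $\lambda(u,v)=1$ and that single bridge is a trivially rainbow minimum $u$-$v$-edge-cut. Otherwise every block met on the way from $u$ to $v$ is a cycle, so $\lambda(u,v)=2$; taking the cyclic block $B$ containing $u$, with $s=u$ and $t$ the cut-vertex of $B$ lying towards $v$ (so $t=v$ if $v\in B$), I would form a cut by deleting one edge from each of the two arcs of $B$ between $s$ and $t$. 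Such a pair of edges is a minimum $u$-$v$-edge-cut of $G$, since every $u$-$v$-path must pass through $t$ and hence must cross one of the two arcs of $B$.

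The only real subtlety, and the step I expect to be the crux, is showing that this $2$-edge cut can always be chosen rainbow. I would argue that the cut fails to be rainbow for every choice of the two deleted edges precisely when both arcs of $B$ between $s$ and $t$ are monochromatic in one and the same color; but the two arcs together constitute all of $B$, so this would force $B$ to be monochromatic, contradicting the fact that our coloring puts both colors on $B$. Therefore one can pick an edge in each arc whose colors differ, producing a rainbow minimum $u$-$v$-edge-cut. Combining the two cases shows the coloring is an srd-coloring, so $\textnormal{srd}(G)\le 2$, and with the lower bound we conclude $\textnormal{srd}(G)=2$.
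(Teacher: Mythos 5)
Your proposal is correct and follows essentially the same route as the paper: the same $2$-coloring (one distinguished edge per cycle block, the other color elsewhere), with the lower bound from Proposition \ref{srd-pro0} and the block structure from Lemma \ref{srd-lem11}. The only differences are cosmetic: you derive the forward direction from Theorem \ref{srd-thm-equ1} instead of re-running the structural argument, and you spell out the cut-verification that the paper dismisses with ``one can easily verify,'' which you do correctly.
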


\begin{proof}
First, if $\textnormal{srd}(G)=2$, then $G$ has no cycle with a chord by Proposition \ref{srd-pro0}.
Furthermore, if $G$ is a tree, we showed $\textnormal{srd}(G)=1$.
Therefore, each block of $G$ is either a $K_2$ or a cycle and at least one block of $G$ is a cycle.
By Lemma \ref{srd-lem11}, we get $\textnormal{rd}(G)=2$.

Conversely, suppose $\textnormal{rd}(G)=2$. Then each block of $G$ is either a $K_2$ or a cycle and at least one block of $G$ is a cycle.
We can give a 2-edge-coloring $c$ for $G$ as follows.
Choose one edge from each cycle to give color $1$. The remaining edges are assigned color $2$.
One can easily verify that the coloring $c$ is strong rainbow disconnected. Combined with Proposition \ref{srd-pro0}, we have $\textnormal{srd}(G)=2$.
\end{proof}

By Lemmas \ref{srd-lem12} and \ref{srd-lem11}, and Theorems \ref{srd-thm-equ1} and \ref{srd-thm-equ2}, we immediately get the following corollary.
\begin{cor}\label{srd-cor1}
Let $G$ be a nontrivial connected graph.
Then

\textnormal{(i)} $\textnormal{srd}(G)=1$ if and only if $G$ is a tree.

\textnormal{(ii)} $\textnormal{srd}(G)=2$ if and only if each block of $G$ is either a $K_2$ or a cycle and at least one block of $G$ is a cycle.
\end{cor}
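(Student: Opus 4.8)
The plan is to obtain both statements purely by composing the two equivalence theorems with the two \textnormal{rd}-characterization lemmas; no fresh combinatorial argument is required, which is exactly why the authors call it a corollary.

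For part (i), I would simply chain biconditionals. Theorem \ref{srd-thm-equ1} supplies $\textnormal{srd}(G)=1 \iff \textnormal{rd}(G)=1$, and Lemma \ref{srd-lem12} supplies $\textnormal{rd}(G)=1 \iff G$ is a tree. Concatenating these two equivalences by transitivity of $\iff$ yields $\textnormal{srd}(G)=1 \iff G$ is a tree, which is precisely (i).

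For part (ii), the structure is identical. Theorem \ref{srd-thm-equ2} gives $\textnormal{srd}(G)=2 \iff \textnormal{rd}(G)=2$, and Lemma \ref{srd-lem11} gives $\textnormal{rd}(G)=2 \iff$ each block of $G$ is a $K_2$ or a cycle and at least one block is a cycle. Composing these produces the desired characterization of $\textnormal{srd}(G)=2$.

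The real content lives entirely in Theorems \ref{srd-thm-equ1} and \ref{srd-thm-equ2}, which collapse the \textnormal{srd}-parameter onto the \textnormal{rd}-parameter at the values $1$ and $2$; here there is essentially nothing left to do. The only point worth a moment's care is that both theorems are stated as exact equalities at the \emph{same} value $k$, so the two chains of equivalences line up and transitivity applies without slack (an inequality-only statement would not suffice). Since that alignment holds by construction, I expect no genuine obstacle in assembling the corollary.
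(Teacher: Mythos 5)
Your proposal is correct and is exactly the paper's argument: the authors state that the corollary follows immediately from Lemmas \ref{srd-lem12} and \ref{srd-lem11} together with Theorems \ref{srd-thm-equ1} and \ref{srd-thm-equ2}, i.e.\ by chaining the two biconditionals at the values $1$ and $2$. Nothing further is needed.
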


Furthermore, we get $\textnormal{srd}(G)=\textnormal{srd}(B)$, where $\textnormal{srd}(B)$ is maximum among all blocks of $G$.
It implies that the study of srd-numbers can be restricted to 2-connected graphs.

\begin{lem}\label{srd-lem-sub}
If $H$ is a block of a graph $G$, then $\textnormal{srd}(H)\leq \textnormal{srd}(G)$.
\end{lem}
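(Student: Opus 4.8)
The plan is to show that an optimal srd-coloring of $G$, when restricted to the edge set of the block $H$, already yields an srd-coloring of $H$; this immediately gives $\textnormal{srd}(H)\le\textnormal{srd}(G)$. Let $c$ be an optimal srd-coloring of $G$ using $\textnormal{srd}(G)$ colors, and let $c|_{E(H)}$ be its restriction to $H$. Since $c|_{E(H)}$ uses at most $\textnormal{srd}(G)$ colors, it suffices to verify that $c|_{E(H)}$ is a valid srd-coloring of $H$, namely that for every pair of distinct vertices $u,v\in V(H)$ there is a rainbow minimum $u$-$v$-edge-cut of $H$ that is monochromatic-free under $c|_{E(H)}$.

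The key structural fact I would exploit is that in a block $H$ of $G$, separating two vertices $u,v\in V(H)$ can be done entirely inside $H$. First I would argue that $\lambda_H(u,v)=\lambda_G(u,v)$ for $u,v\in V(H)$: because $H$ is a block (a maximal $2$-connected subgraph, or a $K_2$), any minimum $u$-$v$-edge-cut of $G$ can be taken to lie entirely within $E(H)$. Intuitively, every $u$-$v$-path in $G$ between two vertices of the same block must enter and leave that block through its cut-vertices, so the ``effective'' flow between $u$ and $v$ is captured inside $H$; edges outside $H$ cannot increase $\lambda_G(u,v)$ beyond $\lambda_H(u,v)$, and of course they cannot decrease it either since $H\subseteq G$. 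Hence a minimum $u$-$v$-edge-cut of $G$ contained in $E(H)$ is simultaneously a minimum $u$-$v$-edge-cut of $H$.

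With this in hand, take the rainbow minimum $u$-$v$-edge-cut $R$ guaranteed by the srd-coloring $c$ of $G$. I would first replace $R$, if necessary, by a minimum $u$-$v$-edge-cut $R'\subseteq E(H)$ of the same cardinality $\lambda_G(u,v)=\lambda_H(u,v)$; here I must be careful, since $R'$ being rainbow is not automatic unless I obtain it from $R$ itself. The cleaner route is to note that the specific cut $R$ produced by $c$ can already be chosen inside $H$: Menger's theorem (Proposition \ref{M}) together with the block structure shows the minimum cut realizing $\lambda_G(u,v)$ separates $u$ from $v$ within $H$, so we may assume $R\subseteq E(H)$. Then $R$ is rainbow under $c$, hence rainbow under the restriction $c|_{E(H)}$, and it is a minimum $u$-$v$-edge-cut of $H$. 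Thus every pair in $H$ has a rainbow minimum edge-cut under $c|_{E(H)}$, proving $c|_{E(H)}$ is an srd-coloring of $H$ and therefore $\textnormal{srd}(H)\le\textnormal{srd}(G)$.

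The main obstacle will be justifying rigorously that the minimum $u$-$v$-edge-cut can be chosen inside the block $H$, i.e.\ the equality $\lambda_H(u,v)=\lambda_G(u,v)$ together with the existence of a realizing cut within $E(H)$. This is where the block structure is essential: I would formalize the claim that all $u$-$v$-paths, for $u,v$ in the same block, can be routed so that their interiors avoiding $H$ contribute nothing to edge-disjoint path counts, using that distinct blocks share at most a single cut-vertex and that a path leaving and re-entering $H$ through the same cut-vertex can be shortcut. Once this routing/shortcutting argument is pinned down, the rest of the proof is the straightforward restriction argument sketched above.
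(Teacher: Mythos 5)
Your proposal is correct and follows essentially the same route as the paper: restrict an optimal srd-coloring of $G$ to $H$ and show that the rainbow minimum $u$-$v$-edge-cut survives, using the block structure to see that a $u$-$v$-edge-cut of $H$ is also a $u$-$v$-edge-cut of $G$. The paper sidesteps the point you flag (justifying $R\subseteq E(H)$) by simply taking $R\cap E(H)$, which is automatically rainbow and is a minimum $u$-$v$-edge-cut of $H$ because any smaller cut of $H$ would also separate $u$ and $v$ in $G$, contradicting the minimality of $R$.
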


\begin{proof}
Let $c$ be an optimal srd-coloring of $G$, and let $u,v$ be two vertices of $H$.
Suppose $R$ is a rainbow minimum $u$-$v$-edge-cut in $G$.
Then $R\cap E(H)$ is a rainbow minimum $u$-$v$-edge-cut in $H$.
Assume that there exists a smaller $u$-$v$-edge-cut $R'$ in $H$. Then there is no $u$-$v$-path in $G\setminus R'$, which is a contradiction with definition of $R$ since $|R'|<|R|$.
Hence, the coloring $c$ restricted to $H$ is an srd-coloring of $H$.
Thus, $\textnormal{srd}(H)\leq \textnormal{srd}(G)$.
\end{proof}

\begin{thm}\label{srd-thm-block}
Let $G$ be a nontrivial connected graph, and $B$ a block of $G$ such that $\textnormal{srd}(B)$ is maximum
among all blocks of $G$. Then $\textnormal{srd}(G)=\textnormal{srd}(B)$.
\end{thm}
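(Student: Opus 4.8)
The plan is to prove the two inequalities $\textnormal{srd}(B)\le \textnormal{srd}(G)$ and $\textnormal{srd}(G)\le \textnormal{srd}(B)$ separately, where $B$ is a block of maximum srd-number and we set $k:=\textnormal{srd}(B)=\max\{\textnormal{srd}(B'):B'\text{ a block of }G\}$. The lower bound $\textnormal{srd}(B)\le\textnormal{srd}(G)$ is immediate from Lemma \ref{srd-lem-sub}. The entire content is therefore the reverse inequality $\textnormal{srd}(G)\le k$, which I would establish by exhibiting an srd-coloring of $G$ that uses only $k$ colors.

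The coloring I would use is the natural one: on each block $B'$ of $G$ put an optimal srd-coloring using $\textnormal{srd}(B')\le k$ colors drawn from a common palette $[k]$, independently from block to block. Since distinct blocks share at most a cut-vertex and no edge, this is a well-defined edge-coloring $c$ of $G$ with at most $k$ colors; the colors are freely reused across different blocks, which is exactly why $k$ colors suffice. It then remains to verify that $c$ is an srd-coloring, i.e.\ that every pair $u,v$ admits a rainbow minimum $u$-$v$-edge-cut.

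The key structural fact, which I expect to be the main obstacle, is that for any two vertices $u,v$ some minimum $u$-$v$-edge-cut of $G$ lies entirely inside a single block. If $u$ and $v$ lie in the same block $B'$, then every $u$-$v$-path stays inside $B'$ (a path leaving $B'$ would have to enter and later leave some branch hanging off a cut-vertex $w$ of $B'$, forcing it to visit $w$ twice), so $\lambda_G(u,v)=\lambda_{B'}(u,v)$ and any minimum $u$-$v$-edge-cut of $B'$ is one of $G$. If $u$ and $v$ lie in different blocks, I would walk along the unique path $B_1,w_1,B_2,\dots,w_{t-1},B_t$ of the block-cut tree joining them, with $u\in B_1$ and $v\in B_t$: every $u$-$v$-path traverses the cut-vertices $w_1,\dots,w_{t-1}$ in order, so by the max-flow interpretation of Proposition \ref{M} one obtains
\[
\lambda_G(u,v)=\min\{\lambda_{B_1}(u,w_1),\lambda_{B_2}(w_1,w_2),\dots,\lambda_{B_t}(w_{t-1},v)\},
\]
and a minimum edge-cut realizing the minimizing block $B_j$ (separating the two relevant endpoints of $B_j$) destroys all $u$-$v$-paths in $G$ and has size $\lambda_G(u,v)$. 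Hence it is a minimum $u$-$v$-edge-cut of $G$ living inside the single block $B_j$. Establishing this localization cleanly---either directly as above, or by repeatedly shrinking the rest of the graph and invoking Lemma \ref{srd-lem21}---is the crux of the argument.

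With the localization in hand the verification is routine: in either case the relevant within-block minimum edge-cut is, by the optimal srd-coloring of that block, rainbow, and therefore is a rainbow minimum $u$-$v$-edge-cut of $G$ (note that no cut we use mixes edges of different blocks, so the reuse of colors across blocks is harmless). Thus $c$ is an srd-coloring of $G$ using at most $k$ colors, giving $\textnormal{srd}(G)\le k=\textnormal{srd}(B)$; combined with the lower bound this yields $\textnormal{srd}(G)=\textnormal{srd}(B)$.
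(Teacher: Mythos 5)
Your proposal is correct and follows essentially the same route as the paper: the lower bound via Lemma \ref{srd-lem-sub}, the block-by-block reuse of an optimal srd-coloring from a common palette of $k$ colors, and the localization of a minimum $u$-$v$-edge-cut to a single block along the block-tree path, with $\lambda_G(u,v)$ equal to the minimum of the within-block edge-connectivities. The paper merely spells out the cut-vertex bookkeeping (the subcases where $u$ or $v$ coincides with an end cut-vertex of the path) more explicitly, but the substance is identical.
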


\begin{proof}
Let $\{B_1,B_2,\ldots, B_t\}$ be the block decomposition of $G$, and
let $k=\max\{\textnormal{srd}(B_i):i\in[t]\}$.
If $G$ has no cut-vertex, then $G = B_1$ and the result follows.
Hence, we may assume that $G$ has at least one cut-vertex.
By Lemma \ref{srd-lem-sub}, we have $k\leq \textnormal{srd}(G)$.

Let $c_i$ be an optimal srd-coloring of $B_i$.
We define the edge-coloring $c$: $E(G)\rightarrow[k]$ of
$G$ by $c(e) = c_i(e)$ if $e\in E(B_i)$.
Let $u$ and $v$ be two vertices of $G$. If $u,v\in B_i$ $(i\in[t])$,
let $C_{G}(u,v)=C^r_{B_i}(u,v)$, where $C^r_{B_i}(u,v)$ is the rainbow minimum $u$-$v$-edge-cut in $B_i$.
Obviously, $C_{G}(u,v)$ is rainbow under the coloring $c_i$.
Moreover, it is minimum $u$-$v$-edge-cut in $G$. Otherwise, assume that $R$ is a smaller $u$-$v$-edge-cut in $G$. Then $R\cap E(B_i)$ is also a $u$-$v$-edge-cut in $B_i$, which contradicts to the definition of $C^r_{B_i}(u,v)$ since $|R\cap E(B_i)|<|C_{B_i}(u,v)|$.
Hence, the $C_{G}(u,v)$ is a rainbow minimum $u$-$v$-edge-cut in $G$.
Suppose that $u\in B_i$ and $v\in B_j$, where $i<j$ and $i,j\in[t]$.
Let $B_ix_iB_{i+1}x_{i+1}\ldots x_{j-1}B_{j}$ be a unique $B_i$-$B_j$-path in the block-tree of $G$, and let $x_i$ be the cut-vertex between blocks $B_i$ and $B_{i+1}$.
If $u=x_i$ and $v=x_{j-1}$, let $C_{G}(u,v)=\min\{C^r_{B_{i+1}}(x_i,x_{i+1}),\ldots, C^r_{B_{j-1}}(x_{j-2},x_{j-1})\}$.
If $u=x_i$ and $v\neq x_{j-1}$, let $C_{G}(u,v)=\min\{C^r_{B_{i+1}}(x_i,x_{i+1}),\ldots,
C^r_{B_{j-1}}(x_{j-2},x_{j-1}),C^r_{B_j}(x_{j-1},v)\}$.
If $u\neq x_i$ and $v=x_{j-1}$, let $C_{G}(u,v)=\min\{C^r_{B_i}(u,x_i),C^r_{B_{i+1}}(x_i,x_{i+1}),\ldots, C^r_{B_{j-1}}(x_{j-2},x_{j-1})\}$.
If $u\neq x_i$ and $v\neq x_{j-1}$, let $C_{G}(u,v)=\min\{C^r_{B_i}(u,x_i),C^r_{B_{i+1}}(x_i,x_{i+1}),\ldots,
C^r_{B_j}(x_{j-1},v)\}$.
By the connectivity of $G$, we know that $\lambda_G(u,v)=|C_{G}(u,v)|$, and $C_{G}(u,v)$ is rainbow.
Then $C_{G}(u,v)$ is a rainbow minimum $u$-$v$-edge-cut in $G$.
Hence, $\textnormal{srd}(G)\leq k$, and so $\textnormal{srd}(G)=k$.
\end{proof}

\begin{rem} As one has seen that all the above results for the
\textnormal{srd}-number behave the same as for the \textnormal{rd}-number.
This supports Conjecture \ref{P}.
\end{rem}

\section{The srd-numbers of some classes of graphs}

In this section, we investigate the srd-numbers of complete graphs, complete multipartite graphs, regular graphs and grid graphs. Again,
we will see that the srd-number behaves the same as the rd-number.
At first, we restate several results as lemmas which will be used in the sequel.

\begin{lem}{\upshape\cite{ACCGZ}}\label{rd-classs}
Let $G$ be a connected graph. If every connected component of $G_\Delta$
is a unicyclic graph or a tree, and $G_\Delta$ is not a disjoint union of cycles, then $G$ is in Class $1$.
\end{lem}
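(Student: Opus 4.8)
The plan is to prove the contrapositive: assuming $G$ is in Class $2$, i.e.\ $\chi'(G)=\Delta+1$ where $\Delta=\Delta(G)$, I would produce a structural contradiction with the hypothesis on $G_\Delta$ (the subgraph of $G$ induced by its vertices of degree $\Delta$). First I would pass from $G$ to an edge-minimal subgraph $H\subseteq G$ with $\chi'(H)=\Delta+1$ and $\Delta(H)=\Delta$. Such an $H$ is $\Delta$-\emph{critical}, hence connected, so Vizing's Adjacency Lemma applies to it. The whole argument then takes place inside $H$, after noting the containment $V_\Delta(H)\subseteq V_\Delta(G)$, so that $H_\Delta:=H[V_\Delta(H)]$ is a subgraph of $G_\Delta$.

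The key step uses Vizing's Adjacency Lemma in its standard corollary form: in the critical graph $H$ every vertex is adjacent to at least two vertices of degree $\Delta$. In particular each vertex of $H_\Delta$ has at least two neighbours inside $H_\Delta$, so $\delta(H_\Delta)\ge 2$. Since $H_\Delta$ is a subgraph of $G_\Delta$, each of its components lies inside a component of $G_\Delta$, which by hypothesis is a tree or a unicyclic graph. A graph of minimum degree at least $2$ must contain a cycle, and a connected subgraph of minimum degree at least $2$ sitting inside a tree-or-unicyclic graph can only be that graph's unique cycle; in particular no component of $G_\Delta$ meeting $H_\Delta$ can be a tree. This already recovers Fournier's theorem: if $G_\Delta$ were a forest then $H_\Delta$ would be acyclic, contradicting $\delta(H_\Delta)\ge 2$.

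The remaining and genuinely harder part is to convert the hypothesis that $G_\Delta$ is \emph{not} a disjoint union of cycles into a contradiction, since so far I have only constrained $H_\Delta$, which is a union of cycles. Here I would exploit that $G$ is connected together with the presence of a non-cyclic component of $G_\Delta$ --- a tree component, or a unicyclic component with a pendant branch --- hence a maximum-degree vertex having at most one maximum-degree neighbour. I would feed this extra slack into the full quantitative form of Vizing's Adjacency Lemma, which forces every lower-degree neighbour of a degree-$\Delta$ vertex lying on a cycle of $H_\Delta$ to itself have degree $\Delta-1$ in $H$, and then run a Vizing-fan and Kempe-chain recoloring around the pendant or tree structure to free a colour and extend a proper $\Delta$-edge-colouring of $H$, contradicting its criticality. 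I expect this bridging step --- passing from the local statement about $H_\Delta$ to the global hypothesis on $G_\Delta$, i.e.\ the true content beyond Fournier's forest case --- to be the main obstacle, and it is where either the careful recoloring, or a careful re-choice of the critical subgraph so that it is forced to meet the non-cyclic part, must be carried out.
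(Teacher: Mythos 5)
First, note that the paper does not prove this statement at all: it is quoted verbatim from the cited reference [ACCGZ] (Akbari, Cariolaro, Chavooshi, Ghanbari, Zare) and used as a black box, so there is no in-paper proof to compare against. Judged on its own, your proposal is not a proof but a partial plan with an explicitly unfilled hole, and the hole is exactly where the content of the lemma lies. The first half is fine: passing to a $\Delta$-critical subgraph $H$ with $\Delta(H)=\Delta(G)$, invoking the corollary of Vizing's Adjacency Lemma that every vertex of a critical graph has at least two major neighbours, and concluding that every component of $H_\Delta$ is a cycle which must be the unique cycle of a unicyclic component of $G_\Delta$ --- this is correct and recovers Fournier's theorem. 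But the lemma's added value over Fournier is precisely the case where $G_\Delta$ has a non-cyclic component, and your sketch for that case does not go through as described.

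The concrete obstruction is this: the ``extra slack'' you want to feed into the quantitative VAL lives in the non-cyclic part of $G_\Delta$, but your own argument shows that part is invisible to $H$. A tree component $D$ of $G_\Delta$ cannot contain any vertex of $H_\Delta$ (a vertex of $H_\Delta$ lies on a cycle of $H_\Delta\subseteq G_\Delta$, so its $G_\Delta$-component is not a tree), and similarly the pendant vertices of a unicyclic component either are absent from $H$ or have degree less than $\Delta$ in $H$; in either case VAL applied inside $H$ says nothing about them, and no contradiction arises from their mere existence. So ``a maximum-degree vertex with at most one maximum-degree neighbour'' need not be a vertex of $H$ at all, and there is no evident way to ``re-choose the critical subgraph so that it meets the non-cyclic part'' --- critical subgraphs are not under your control in that way. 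Closing this gap requires a genuinely different mechanism from the local VAL argument, e.g.\ the global induction on edges in the style of Fournier's proof (delete an edge at a vertex of $G_\Delta$ with at most one major neighbour, recolour by a Vizing-fan/Kempe-chain argument, and verify the unicyclic-or-tree hypothesis is preserved for the smaller graph), which is essentially what the cited source does. As it stands, your proposal proves only the forest case and correctly diagnoses, but does not resolve, the remaining one.
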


\begin{lem}{\upshape \cite{CDHHZ}}\label{srd-lem16}
For each integer $n\geq 4$, $\textnormal{rd}(K_n) = n-1$.
\end{lem}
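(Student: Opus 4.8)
The plan is to establish the two inequalities $\textnormal{rd}(K_n)\ge n-1$ and $\textnormal{rd}(K_n)\le n-1$ separately, and then read off equality.

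For the lower bound, I would first pin down $\lambda^+(K_n)$. In $K_n$ any two vertices $u,v$ admit $n-1$ pairwise edge-disjoint $u$-$v$-paths, namely the direct edge $uv$ together with the $n-2$ paths $uwv$ obtained from the remaining vertices $w$; these are pairwise edge-disjoint, so Proposition \ref{M} gives $\lambda(u,v)=n-1$ for every pair, whence $\lambda^+(K_n)=n-1$. Proposition \ref{srd-pro0} then yields $\textnormal{rd}(K_n)\ge\lambda^+(K_n)=n-1$ at once.

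For the upper bound I would split on the parity of $n$, guided throughout by the observation that a trivial edge-cut $E_w$ (the edges incident with $w$) has exactly $n-1=\lambda(u,v)$ edges, so whenever $E_w$ is rainbow it is a rainbow $u$-$v$-edge-cut for every $v\ne w$. When $n$ is even, $K_n$ is of Class $1$, so a proper $(n-1)$-edge-coloring exists; under it every $E_w$ is rainbow, and for any pair $u,v$ the cut $E_u$ works, so $\textnormal{rd}(K_n)\le n-1$. When $n$ is odd a proper coloring would require $n$ colors, so I would build a coloring by hand: color the subgraph on $\{1,\dots,n-1\}$, which is an even-order $K_{n-1}$ and hence of Class $1$, properly with the colors $\{1,\dots,n-2\}$, and then assign the single extra color $n-1$ to every edge from vertex $n$ to the rest. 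Each vertex $i\le n-1$ is then incident with the $n-2$ colors of the proper coloring together with color $n-1$ on edge $in$, so $E_i$ is rainbow; only the star at vertex $n$ fails to be rainbow. For a pair inside $\{1,\dots,n-1\}$ I use either trivial cut, and for a pair $(i,n)$ I use the rainbow cut $E_i$. Thus $\textnormal{rd}(K_n)\le n-1$ in both cases, and combined with the lower bound $\textnormal{rd}(K_n)=n-1$.

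I expect the odd case to be the only genuine obstacle. Since $K_n$ is then of Class $2$, it is impossible to make every trivial cut rainbow with only $n-1$ colors, so the construction must deliberately sacrifice one vertex (here vertex $n$), leaving its incident-edge set non-rainbow, while still covering every pair by a rainbow cut at the other endpoint. The crux is verifying that this single bad vertex never obstructs any pair, which succeeds precisely because for a pair $(i,n)$ the cut $E_i$ at the good endpoint is always available and rainbow.
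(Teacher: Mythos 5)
Your proof is correct, and the construction is exactly the one the paper uses: this lemma is imported from [CDHHZ], but the paper restates the same coloring in its proof of $\textnormal{srd}(K_n)=n-1$ (proper $(n-1)$-coloring for even $n$; for odd $n$, a proper $(n-2)$-coloring of $K_{n-1}=K_n-x$ plus one new color on the star at $x$), with the lower bound likewise coming from $\lambda^+(K_n)=n-1$ via Proposition \ref{srd-pro0}. Your closing remark correctly identifies why the odd case must sacrifice one vertex, and the verification that the sacrificed vertex never obstructs a pair is exactly the point the paper's argument relies on.
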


\begin{lem}{\upshape \cite{BCHL}}\label{rd-lem15}
If $G=K_{n_1,n_2,...,n_k}$ is a complete $k$-partite graph with
order $n$, where $k\geq 2$ and $n_1\leq n_2\leq \cdots \leq n_k$,
then
$$\textnormal{rd}(K_{n_1,n_2,...,n_k})=
\begin{cases}
n-n_2,& \text{if $n_1=1$},\\
n-n_1,& \text{if $n_1\geq 2$}.
\end{cases}$$
\end{lem}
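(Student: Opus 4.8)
The plan is to prove the two inequalities $\textnormal{rd}(G)\geq \lambda^+(G)$ and $\textnormal{rd}(G)\leq$ (the claimed value) separately, and to check that in each size regime $\lambda^+(G)$ equals the claimed value, so that the two bounds pinch. The engine on the upper-bound side is the elementary observation that \emph{every proper edge-coloring is an rd-coloring}: if adjacent edges always receive distinct colors, then for any two vertices $u,v$ the set of edges incident with the endpoint of smaller degree is a rainbow edge-cut separating them. Hence $\textnormal{rd}(G)\leq \chi'(G)$, and the whole problem reduces to producing proper (or almost proper) colorings with the right number of colors.

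For the lower bound I would first record that in a complete multipartite graph a vertex in the part of size $n_i$ has degree $n-n_i$, and that $\lambda(u,v)=\min\{d(u),d(v)\}$ for every pair $u\neq v$; the inequality $\lambda(u,v)\le\min\{d(u),d(v)\}$ is given by the trivial cut, and equality follows from Proposition \ref{M} by exhibiting $\min\{d(u),d(v)\}$ pairwise edge-disjoint $u$-$v$-paths (length-two paths through common neighbours, completed by a few length-three detours when $u,v$ lie in distinct parts). Consequently $\lambda^+(G)=\max_{u\neq v}\min\{d(u),d(v)\}$. When $n_1\ge 2$ the smallest part contains two vertices of the maximum degree $n-n_1$, so $\lambda^+(G)=n-n_1$; when $n_1=1$ the maximum degree $n-1$ is attained only by the single vertex of part $1$, so the largest value of $\min\{d(u),d(v)\}$ is the second largest degree $n-n_2$, whence $\lambda^+(G)=n-n_2$. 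By Proposition \ref{srd-pro0} this yields $\textnormal{rd}(G)\geq \lambda^+(G)$ equal to the claimed quantity in both cases.

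For the upper bound I would split on $n_1$. If $n_1\ge 2$ then $\lambda^+(G)=n-n_1=\Delta(G)$, so it suffices to show $G$ is Class $1$ and take an optimal proper edge-coloring. When the minimum part size is attained by a single part, the set $G_\Delta$ of maximum-degree vertices is exactly that part, an independent set, so Lemma \ref{rd-classs} applies and $G$ is Class $1$. If instead $n_1=1$, then $\Delta(G)=n-1$ exceeds the budget $n-n_2$, so a proper coloring of $G$ is too expensive; here I would relax properness only at the unique vertex $w$ of part $1$. Concretely, form the auxiliary graph $G'$ by deleting $w$ and attaching one pendant edge to each remaining vertex (the stubs of the edges $wx$); a proper edge-coloring of $G'$ is precisely a coloring of $G$ that is rainbow on the trivial cut at every vertex other than $w$. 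Since every such vertex has degree at most $n-n_2$ in $G$, we have $\Delta(G')=n-n_2$, and a proper $(n-n_2)$-edge-coloring of $G'$ gives the required coloring of $G$: to separate two vertices $u,v\neq w$ use the rainbow trivial cut at the endpoint of smaller degree, and to separate $w$ from any $v$ use the rainbow trivial cut at $v$, which has at most $n-n_2$ edges. In both sub-cases the coloring uses exactly the claimed number of colors, matching the lower bound.

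The main obstacle is the chromatic-index bookkeeping underlying these constructions, since Lemma \ref{rd-classs} does not cover every configuration. When several parts share the minimum size $n_1$, the graph $G_\Delta$ is itself a nontrivial complete multipartite graph containing cycles, so Fournier's criterion fails and one must verify that $G$ is Class $1$ by a direct perfect-matching decomposition (for balanced cases such as $K_{2,2,2}$) or by invoking the classification of Class $2$ complete multipartite graphs. Likewise, in the $n_1=1$ case one must confirm that the auxiliary graph $G'$ is Class $1$, i.e. that the pendant edges can always be absorbed without raising the chromatic index above $\Delta(G')$; I expect this to require the same case analysis, isolating a handful of small exceptional multipartite graphs, as the $n_1\ge 2$ case. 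This determination of the exact chromatic index is where the real work lies, and once it is settled the lower and upper bounds coincide and the stated formula follows.
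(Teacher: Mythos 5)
Your lower bound is fine: $\lambda(u,v)=\min\{d(u),d(v)\}$ in a complete multipartite graph, and the resulting value of $\lambda^+(G)$ matches the claimed formula in both regimes. The gap is in your upper bound for $n_1\ge 2$. Your route is $\textnormal{rd}(G)\le\chi'(G)$ together with the hope that $G$ is Class $1$, and you defer the case where several parts share the minimum size to ``a direct perfect-matching decomposition or the classification of Class $2$ complete multipartite graphs.'' That classification does not rescue you: a complete multipartite graph is Class $2$ exactly when it is overfull, and every regular complete multipartite graph of odd order is Class $2$. For instance $K_{3,3,3}$ has $n=9$, $n_1=3$, target value $n-n_1=6$, but $\chi'(K_{3,3,3})=7$; the same happens for $K_{5,5,5}$, $K_{3,3,3,3,3}$, and an infinite family, not a handful of small exceptions. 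So for these graphs no proper edge-coloring can achieve the bound, and the step ``show $G$ is Class $1$'' is unfixable as stated.

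The repair is the vertex-deletion trick you already use for $n_1=1$, applied uniformly: pick $u$ in a smallest part $V_1$ and set $F=G-u$. Then $\Delta(F)=n-n_1$ and the maximum-degree vertices of $F$ are exactly $V_1\setminus\{u\}$, an independent set, so Lemma \ref{rd-classs} applies to $F$ (not to $G$) and $\chi'(F)=n-n_1$. Every $x\in N_G(u)$ has $d_F(x)\le n-n_1-1$, hence misses some color, and coloring $ux$ with a missing color at $x$ extends to an $(n-n_1)$-coloring of $G$ in which the trivial cut at every vertex other than $u$ is rainbow; that suffices since every pair contains a vertex different from $u$ whose degree realizes $\lambda(u,v)$. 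The resulting coloring is not proper at $u$, which is precisely why it beats $\chi'(G)$. This is the argument the present paper gives for the srd analogue (Theorem \ref{rd-thm1}, Case 2), and the $n_1=1$ case is handled the same way by deleting the unique vertex of $V_1$ and using Vizing's bound $\chi'(H)\le\Delta(H)+1=n-n_2$ on $H=G-v_{1,1}$; note that this also disposes of your worry about the pendant-edge graph $G'$ being Class $1$, since the missing-color extension of a $(\Delta(H)+1)$-coloring of $H$ does that job automatically.
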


\begin{lem}{\upshape \cite{BCHL}}\label{rd-regu}
If $G$ is a connected $k$-regular graph, then $k \leq
\textnormal{rd}(G) \leq k+1$.
\end{lem}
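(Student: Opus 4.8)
The plan is to prove the two inequalities separately, since they come from quite different sources: the upper bound $\textnormal{rd}(G)\le k+1$ is a direct consequence of Vizing's theorem, whereas the lower bound $\textnormal{rd}(G)\ge k$ rests on the relation $\lambda^+(G)\le \textnormal{rd}(G)$ from Proposition \ref{srd-pro0} together with the computation $\lambda^+(G)=k$, where $\lambda^+(G)=\max_{x\ne y}\lambda(x,y)$ is the upper edge-connectivity.

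For the upper bound I would first observe that every proper edge-coloring of $G$ is automatically an rd-coloring. Indeed, if $c$ is proper then for each vertex $w$ the edges incident with $w$ receive pairwise distinct colors, so the trivial edge-cut $\partial(\{w\})$ is rainbow. Given any two distinct vertices $u,v$, the trivial cut $\partial(\{u\})$ is a $u$-$v$-edge-cut, since deleting it isolates $u$, and it is rainbow; hence it is a rainbow $u$-$v$-edge-cut. Therefore $\textnormal{rd}(G)\le \chi'(G)$, and by Vizing's theorem \cite{V} we have $\chi'(G)\le \Delta(G)+1=k+1$ because $G$ is $k$-regular. This settles $\textnormal{rd}(G)\le k+1$.

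For the lower bound, by Proposition \ref{srd-pro0} it suffices to show $\lambda^+(G)=k$. The inequality $\lambda^+(G)\le k$ is immediate, because for any pair $x,y$ the trivial cut $\partial(\{x\})$ gives $\lambda(x,y)\le d(x)=k$. The content is the reverse inequality $\lambda^+(G)\ge k$, i.e. the existence of a single pair of vertices whose local edge-connectivity is exactly $k$; this is the main obstacle, since a fixed vertex need not be joined to any other vertex by $k$ edge-disjoint $u$-$v$-paths in the sense of Proposition \ref{M}. To produce such a pair I would invoke a Gomory--Hu tree $T$ of $G$ \cite{GH}: for each edge $e$ of $T$, the induced bipartition $(S_e,\overline{S_e})$ of $V(G)$ is a minimum edge-cut separating the two endpoints of $e$, of weight $|\partial(S_e)|$. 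Choosing $e$ to be the tree-edge incident with a leaf $\ell$ of $T$, one side of the bipartition is the singleton $\{\ell\}$, so the associated minimum cut is the trivial cut $\partial(\{\ell\})$, whose weight is $d(\ell)=k$. Consequently the two endpoints $\ell,p$ of $e$ satisfy $\lambda(\ell,p)=k$, giving $\lambda^+(G)\ge k$ and hence $\lambda^+(G)=k$. Combining the two bounds yields $k=\lambda^+(G)\le \textnormal{rd}(G)\le k+1$.

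Finally, I would note that which of $k$ or $k+1$ actually occurs is controlled by whether $G$ lies in Class $1$ or Class $2$: the upper-bound argument in fact gives $\textnormal{rd}(G)\le \chi'(G)$, so any Class $1$ $k$-regular graph already satisfies $\textnormal{rd}(G)=k$. Sharpening the estimate in this way (for instance via Lemma \ref{rd-classs}) is not needed for the present two-sided bound, but it indicates where the finer analysis of $\textnormal{rd}$ and $\textnormal{srd}$ for regular graphs would proceed.
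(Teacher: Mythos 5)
Your proof is correct. Note that the paper itself gives no argument for this lemma --- it is imported verbatim from \cite{BCHL} --- so there is nothing internal to compare against; your two-sided argument (upper bound $\textnormal{rd}(G)\le\chi'(G)\le k+1$ via the rainbow trivial cuts of a proper edge-coloring and Vizing's theorem, lower bound via $\textnormal{rd}(G)\ge\lambda^+(G)$ from Proposition \ref{srd-pro0}) is exactly the standard route taken in the cited source. You also correctly identify and close the one genuinely non-obvious gap, namely that $\lambda^+(G)\ge k$ requires exhibiting a pair with local edge-connectivity $k$; the Gomory--Hu leaf argument you give is a clean and valid way to do this (a maximum-adjacency-ordering argument would serve equally well).
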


\begin{lem}{\upshape\cite{ACCGZ}}\label{rd-2}
The \textnormal{rd}-number of the grid graph $G_{m,n}$ is as follows.

\textnormal{(i)} For all $n\geq 2$, $\textnormal{rd}(G_{1,n})=\textnormal{rd}(P_n) = 1$.

\textnormal{(ii)} For all $n\geq 3$, $\textnormal{rd}(G_{2,n})=2$.

\textnormal{(iii)} For all $n\geq 4$, $\textnormal{rd}(G_{3,n})=3$.

\textnormal{(iv)} For all $4\geq m\geq n$, $\textnormal{rd}(G_{m,n})=4$.
\end{lem}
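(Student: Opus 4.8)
The plan is to prove each of the four parts by squeezing $\text{rd}(G_{m,n})$ between a lower bound coming from Proposition \ref{srd-pro0} and an upper bound coming from an explicit edge-coloring. Throughout I write $G_{m,n}=P_m \square P_n$ and label its vertices $(i,j)$ with $1\le i\le m$ and $1\le j\le n$. Part (i) needs no work: $G_{1,n}=P_n$ is a tree, so $\text{rd}(G_{1,n})=1$ by Lemma \ref{srd-lem12}.

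For the lower bounds I would invoke $\lambda^+(G)\le \text{rd}(G)$ from Proposition \ref{srd-pro0}, so it suffices to exhibit in each part a single pair of vertices $u,v$ whose local edge-connectivity $\lambda(u,v)$ reaches the claimed value; by Menger's theorem (Proposition \ref{M}) this amounts to displaying that many pairwise edge-disjoint $u$-$v$-paths. I would take $u$ and $v$ to be adjacent interior vertices of largest degree and route one path along the joining edge and the remaining ones through the rows (or columns) on either side of them, pushing an extra path around the boundary when the grid is both tall and wide enough; counting how many such independent detours the grid admits gives the lower bound in each part.

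For the upper bounds I would produce a coloring with the prescribed number of colors and check that every pair of vertices admits a rainbow $u$-$v$-edge-cut. The key observation is that under a proper edge-coloring every star $\partial(w)$ is a rainbow cut, so for any pair $u,v$ the star $\partial(u)$ is already a rainbow separating cut; hence $\text{rd}(G_{m,n})\le \chi'(G_{m,n})$. Since grids are bipartite, K\"onig's theorem gives $\chi'(G_{m,n})=\Delta(G_{m,n})$ (and in the thin cases $G_\Delta$ is a path, so Lemma \ref{rd-classs} applies as well), and this immediately settles every part in which the target equals $\Delta$. In the parts where the target is strictly below $\Delta$, a proper coloring wastes a color, so I would instead use a coloring that is periodic along the long direction: color the horizontal edges and the vertical edges by a short repeating pattern, arranged so that for vertices in different columns some column-boundary cut is rainbow, and for vertices sharing a column a small local cut around one endpoint is rainbow.

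The main obstacle is precisely this refined upper-bound coloring in the borderline parts, the model case being $\text{rd}(G_{3,n})\le 3$ while $\Delta=4$: one must reuse three colors periodically along all $n$ columns yet still guarantee a rainbow separating cut for every pair, including same-row, same-column, boundary, and corner pairs, and confirm that the same pattern works for all $n$ simultaneously. I expect that once one period of the coloring is fixed and the separating cuts are checked for the finitely many local pair-types, translation invariance along the columns propagates the verification to all pairs and matches the lower bound, completing the proof.
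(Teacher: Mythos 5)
First, a point of comparison: the paper offers no proof of this lemma at all --- it is imported from the literature via the citation, so there is no in-paper argument to measure your sketch against. Judged on its own terms, your overall shape (lower bound from $\lambda^{+}(G)\le \textnormal{rd}(G)$ via edge-disjoint paths, upper bound from an explicit coloring together with $\textnormal{rd}(G)\le \chi'(G)$ via star cuts) is the standard and correct one, and parts (i) and (iv) would go through. But there are two genuine gaps.

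The first is that your plan cannot succeed for part (ii) as stated, and your own lower-bound device already detects this. For $n\ge 3$ take $u=x_{1,2}$ and $v=x_{2,2}$ in $G_{2,n}$: the edge $uv$ together with the two detours through columns $1$ and $3$ gives three pairwise edge-disjoint $u$-$v$-paths, so $\lambda(u,v)=3$ and hence $\lambda^{+}(G_{2,n})=3$ and $\textnormal{rd}(G_{2,n})\ge 3$ (equivalently, Lemma \ref{srd-lem11} forces this, since $G_{2,n}$ is a single $2$-connected block that is not a cycle). Since every $u$-$v$-edge-cut then has at least three edges, no $2$-coloring can admit a rainbow $u$-$v$-edge-cut, so the $2$-color construction you propose to build for (ii) does not exist; the correct value is $3$, consistent with the fact that Theorem \ref{srd-thm0}(ii) in this very paper colors $G_{2,n}$ with the three elements of $Z_3$. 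Had you actually run your ``adjacent interior vertices of largest degree'' step on $G_{2,n}$ instead of only aiming at the claimed value $2$, you would have caught this. The second gap is part (iii): here $\Delta(G_{3,n})=4$, so the $\chi'$ bound gives only $4$, and the entire content of the statement is an explicit $3$-coloring admitting a rainbow separating cut for every pair; your proposal never produces one, and ``I expect that once one period of the coloring is fixed \dots completes the proof'' is a statement of intent, not an argument. A concrete pattern that works is the one used in Theorem \ref{srd-thm0}(iii): over $Z_3$, set $c(x_{i,j}x_{i,j+1})=i+j+1$, $c(x_{1,j}x_{2,j})=j$, $c(x_{2,j}x_{3,j})=j+2$, after which one checks that every star at a vertex of degree at most $3$ is rainbow and that each inter-column cut of three parallel edges is rainbow. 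Until such a coloring and verification are supplied, (iii) is unproved. (Also, the condition ``$4\ge m\ge n$'' in (iv) should be read as $n\ge m\ge 4$, as you implicitly did.)
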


First, we get the \textnormal{srd}-number for complete graphs.
\begin{thm}
For each integer $n\geq 2$, $\textnormal{srd}(K_n) = n-1$.
\end{thm}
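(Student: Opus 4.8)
The plan is to first pin down exactly which edge-cuts can serve as minimum $u$-$v$-edge-cuts in $K_n$, and then to translate the srd-condition into a purely local requirement on the stars at the vertices. The key observation is that a partition of $V(K_n)$ into nonempty sets $X$ and $\overline X$ yields a cut $\partial(X)$ of size $|X|\cdot|\overline X|$. Writing $|X|=a$, the equation $a(n-a)=n-1$ has only the solutions $a=1$ and $a=n-1$, while every intermediate value gives $a(n-a)\ge 2(n-2)>n-1$ for $n\ge 4$. Hence $\lambda_{K_n}(u,v)=n-1$, and for every pair $u,v$ the only minimum $u$-$v$-edge-cuts are the two trivial cuts $E_u$ and $E_v$ (the sets of edges incident with $u$ and with $v$), since a singleton side other than $\{u\}$ or $\{v\}$ fails to separate $u$ and $v$. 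Consequently an edge-coloring of $K_n$ is an srd-coloring if and only if for every pair $u,v$ at least one of $E_u,E_v$ is rainbow; equivalently, calling a vertex $w$ \emph{good} when $E_w$ is rainbow, the set of non-good vertices has at most one element.

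For the lower bound, since $n\ge 2$ there is at least one good vertex, and a good vertex of $K_n$ has $n-1$ incident edges carrying $n-1$ distinct colors, so $\textnormal{srd}(K_n)\ge n-1$ (for $n\ge 4$ this also follows at once from Proposition \ref{srd-pro0} together with Lemma \ref{srd-lem16}). For the upper bound I would exhibit a coloring with $n-1$ colors in which at most one vertex is non-good. When $n$ is even, $\chi'(K_n)=n-1$, and any proper edge-coloring makes every star rainbow, so it is already an srd-coloring. When $n$ is odd, $n-1$ is even, so $K_{n-1}$ on $\{v_1,\dots,v_{n-1}\}$ admits a proper edge-coloring with the colors in $[n-2]$; I then assign the new color $n-1$ to all $n-1$ edges incident with $v_n$. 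Each $v_i$ with $i\le n-1$ sees the colors $1,\dots,n-2$ inside $K_{n-1}$ and the color $n-1$ on $v_iv_n$, hence is good, while only $v_n$ is non-good, which the characterization above permits. This uses exactly $n-1$ colors.

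Finally I would dispose of the small cases $n\in\{2,3\}$ directly: $K_2=P_2$ and $K_3$ is a cycle, so Corollary \ref{srd-cor1} gives $\textnormal{srd}(K_2)=1$ and $\textnormal{srd}(K_3)=2$, both equal to $n-1$; this completes $\textnormal{srd}(K_n)=n-1$. The only genuine obstacle is the odd case, where $\chi'(K_n)=n>n-1$ forbids a globally proper coloring; the point is that the srd-condition tolerates a single non-rainbow star, and concentrating the entire defect at one vertex $v_n$ converts this slack into precisely the one saved color.
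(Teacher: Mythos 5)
Your proposal is correct and uses essentially the same construction as the paper: a proper $(n-1)$-edge-coloring when $n$ is even, and for odd $n$ a proper $(n-2)$-coloring of $K_{n-1}$ with one new color on the star at the remaining vertex. The only difference is that you first prove the (correct) sharper fact that the trivial cuts $E_u,E_v$ are the \emph{only} minimum $u$-$v$-edge-cuts in $K_n$, turning the srd-condition into ``at most one non-rainbow star''; the paper skips this and simply verifies directly that $E_u$ is a rainbow minimum cut for a suitable endpoint $u$, which suffices.
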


\begin{proof}
By Proposition \ref{srd-pro0} and Lemma \ref{srd-lem16},
$n-1\leq \textnormal{rd}(K_n)\leq \textnormal{srd}(K_n)$.
It remains to show that there exists an srd-coloring for $K_n$ using $n-1$ colors.
Suppose first that $n\geq 2$ is even.
Let $u$ and $v$ be two vertices of $K_n$, and let $c$ be a
proper edge-coloring of $K_n$ using $n-1$ colors.
Since $\lambda(K_n)=n-1$,
the set $E_u$ of edges incident with $u$ is a rainbow minimum $u$-$v$-edge-cut in $G$.
Next suppose $n\geq 3$ is odd.
We give the same edge-coloring for graph $G$ as the coloring in Lemma \ref{srd-lem16}.
We now restate it as follows.
Let $x$ be a vertex of $K_n$ and $K_{n-1}=K_n-x$.
Then $K_{n-1}$ has a proper edge-coloring $c$ using $n-2$ colors
since $n-1$ is even.
Now we extend an edge-coloring $c$ of $K_{n-1}$ to $K_n$ by assigning color $n-1$ for each edge incident with vertex $x$.
We show that the $c$ is an srd-coloring of $G$.
Let $u$ and $v$ be two vertices of $K_n$, say $u\neq x$. Then the set $E_u$ of edges incident with $u$ is a rainbow minimum $u$-$v$-edge-cut in $G$ since $\lambda(K_n)=n-1$.
\end{proof}

Then, we give the srd-number for complete multipartite graphs.
\begin{thm}\label{rd-thm1}
If $G=K_{n_1,n_2,...,n_k}$ is a complete $k$-partite graph with
order $n$, where $k\geq 2$ and $n_1\leq n_2\leq \cdots \leq n_k$,
then
$$\textnormal{srd}(K_{n_1,n_2,...,n_k})=
\begin{cases}
n-n_2,& \text{if $n_1=1$},\\
n-n_1,& \text{if $n_1\geq 2$}.
\end{cases}$$
\end{thm}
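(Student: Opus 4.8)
The plan is to prove the two inequalities separately, and the lower bound is immediate. By Proposition~\ref{srd-pro0} we have $\textnormal{srd}(G)\ge\textnormal{rd}(G)$, and Lemma~\ref{rd-lem15} already evaluates $\textnormal{rd}(K_{n_1,\ldots,n_k})$ to be exactly the claimed quantity. Thus $\textnormal{srd}(G)$ is at least the right-hand side, and everything reduces to exhibiting an srd-coloring that uses no more colors than the formula allows.

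The backbone of the upper bound is that in a complete multipartite graph the local edge-connectivity is as simple as possible: for any two distinct vertices $u,v$ one has $\lambda(u,v)=\min\{d(u),d(v)\}$. Indeed, two vertices in a common part share all their neighbors, yielding $\min\{d(u),d(v)\}$ internally disjoint length-two $u$-$v$-paths, while for $u,v$ in different parts one explicitly builds $\min\{d(u),d(v)\}$ edge-disjoint paths (the edge $uv$, the paths through common neighbors, and paths rerouted through the larger part); Proposition~\ref{M} then turns these path counts into cut sizes. Consequently the trivial edge-cut $E_w$ around the endpoint $w$ of smaller degree is always a \emph{minimum} $u$-$v$-edge-cut. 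So it suffices to color $G$ so that $E_w$ is rainbow for all but at most one carefully chosen vertex, provided that excluded vertex is never the unique endpoint able to supply a minimum cut of a pair. The key point is that demanding \emph{every} $E_w$ be rainbow is exactly a proper edge-coloring, costing $\chi'(G)$ colors; excluding one vertex is precisely what lets us beat $\chi'(G)$ when $G$ is Class~$2$, and beat $\Delta(G)$ when $n_1=1$.

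For $n_1\ge 2$ the target is $n-n_1=\Delta(G)$. I would pick a vertex $z$ in a smallest part $P$ (so $d(z)=\Delta$) and delete it. In $G-z$ the only vertices still of degree $n-n_1$ are the $n_1-1\ge 1$ part-mates of $z$, which are pairwise nonadjacent, so $(G-z)_\Delta$ is edgeless; by Lemma~\ref{rd-classs} $G-z$ is Class~$1$, giving a proper edge-coloring with exactly $n-n_1$ colors. Each neighbor $y$ of $z$ has $d_{G-z}(y)\le n-n_1-1$, hence leaves a free color in $[n-n_1]$, so I can extend the coloring to the edges at $z$ (allowing repeats among them, since $E_z$ need not be rainbow). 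Then $E_w$ is rainbow for every $w\ne z$, and since $z$ has maximum degree, for any pair containing $z$ the other endpoint already provides a rainbow minimum trivial cut. For $n_1=1$ the target drops to $n-n_2<\Delta(G)=n-1$, reflecting that the trivial cut around the universal vertex $x$ is never minimal. Here I would proper-color $G-x=K_{n_2,\ldots,n_k}$ with $\chi'(G-x)\le n-n_2$ colors (Vizing), and extend to the edges at $x$: each neighbor $y$ has $d_{G-x}(y)\le n-1-n_2<n-n_2$, so a free color always exists. Again every $w\ne x$ has $E_w$ rainbow, and since $x$ has (weakly) maximum degree, every pair still has a rainbow minimum trivial cut around a non-$x$ endpoint.

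The main obstacle is conceptual rather than computational: a naive attempt to make all $E_w$ rainbow forces a proper coloring, which fails both when $G$ is regular of odd order (hence Class~$2$, so $\chi'(G)=\Delta(G)+1$ exceeds the target $n-n_1$) and whenever $n_1=1$. The decisive trick is to realize that, because all relevant minimum cuts are trivial cuts and for each pair we only need \emph{one} rainbow minimum cut, it is enough to leave a single vertex $z$ ``unprotected'' and that deleting such a $z$ collapses the remaining graph to a Class~$1$ graph, whose max-degree vertices become an independent set. Verifying this Class~$1$ reduction through Lemma~\ref{rd-classs} and checking that a free color is always available to extend across the edges at $z$ are the technical heart of the argument; once these are in place the two cases close the gap between the lower and upper bounds.
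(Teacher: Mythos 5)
Your proposal is correct and follows essentially the same route as the paper: lower bound from Proposition \ref{srd-pro0} and Lemma \ref{rd-lem15}, then deleting one vertex of (weakly) maximum degree, properly edge-coloring the remainder (using Lemma \ref{rd-classs} to get Class 1 in the $n_1\geq 2$ case), extending via free colors at the neighbors, and observing that every pair admits a rainbow minimum trivial cut at an endpoint other than the deleted vertex. The only difference is cosmetic: you make explicit the fact that $\lambda(u,v)=\min\{d(u),d(v)\}$ in complete multipartite graphs, which the paper uses implicitly.
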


\begin{proof}
It remains to prove that $\textnormal{srd}(G) \leq n-n_2$ for $n_1=1$, and $\textnormal{srd}(G) \leq n-n_1$ for $n_1\geq 2$ by Proposition \ref{srd-pro0} and Lemma \ref{rd-lem15}.
Let $V_1,V_2,\ldots V_k$
be the $k$-partition of the vertices of $G$, and
$V_i=\{v_{i,1},v_{i,2},\ldots, v_{i,n_i}\}$ for each $i\in [k]$. We distinguish the following two cases.

\textbf{Case 1.} $n_1=1$.

First, we have $V_1=\{v_{1,1}\}$ and $d(v_{1,1})=n-1$. Let
$H=G-\{v_{1,1}\}$. Then $\Delta(H) = n-n_2-1$.
Then, we construct a proper edge-coloring $c_0$ of $H$ using colors from
$[\Delta(H) + 1]$. For each vertex $x \in V(H)$, since $d_{H}(x)\leq
\Delta(H)$, there is an $a_x\in [\Delta(H) + 1]$ such that $a_x$ is
not assigned to any edge incident with $x$ in $H$. Since
$E(G)=E(H)\cup \{v_{1,1}x \mid x\in N_G(v_{1,1})\}$, we now extend the
edge-coloring $c_0$ of $H$ to an edge-coloring $c$ of $G$ by
assigning $c(v_{1,1}x)=a_x$ for every vertex $x\in N_G(v_{1,1})$. Note that the set $E_x$ of edges incident with $x$ is a rainbow set for each vertex $x\in V(G)\setminus v_{1,1}$ in $G$.
Suppose $p$ and $q$ are two vertices of $G$.
If $p\in V_i$ and $q\in V_j$ $(1\leq i<j\leq t)$, then the set $E_q$ of edges incident with $q$ is a rainbow minimum $p$-$q$-edge-cut in $G$ since $\lambda_G(p,q)=n-n_j$.
If $p,q\in V_i$, then the set $E_q$ of edges incident with $q$ is a rainbow minimum $p$-$q$-edge-cut in $G$ since $\lambda_G(p,q)=n-n_i$.
Hence, we obtain $\textnormal{srd}(G)\leq \Delta(H)+1=n-n_2$.

\textbf{Case 2.} $n_1\geq 2$.

Pick a vertex $u$ of $V_1$ and let $F=G-u$. Then $\Delta(F) = n-n_1$
since $n_1\geq 2$ and $F_\Delta=G[V_1-u]$. It follows from Lemma
\ref{rd-classs} that $F$ is in Class $1$, and so $\chi'(F)=n-n_1$.
Furthermore, for each vertex $x\in N_G(u)$,
we know $d_F(x)\leq \Delta(F)-1= n-n_1-1$. Similar to the argument of Case 1, we can construct an edge-coloring $c$ for $G$ such that the
set $E_x$ of edges incident with $x$ is a rainbow set for each vertex $x\in V(G)\setminus u$ using $n-n_1$ colors.
Suppose $p$ and $q$ are two vertices of $G$.
If $p\in V_i$ and $q\in V_j$ $(1\leq i<j\leq t)$, then the set $E_q$ of edges incident with $q$ is a rainbow minimum $p$-$q$-edge-cut in $G$ since $\lambda_G(p,q)=n-n_j$.
If $p,q\in V_i$ $(i\in[t])$, say $q\neq u$, then the set $E_q$ of edges incident with $q$ is a rainbow minimum $p$-$q$-edge-cut in $G$ since $\lambda_G(p,q)=n-n_i$.
Hence, $\textnormal{srd}(G)\leq n-n_1$.
\end{proof}

For regular graphs, we only study the srd-number of $k$-edge-connected $k$-regular graphs. Moreover, we obtain that $\textnormal{srd}(G)=k$ if and only if $\chi'(G)=k$ for a $k$-edge-connected $k$-regular graph $G$, where $k$ is odd.

\begin{lem}{\upshape \cite{BHL}}\label{rd-lem22}
Let $k$ be an odd integer, and $G$ a $k$-edge-connected $k$-regular graph of order $n$. Then $\chi'(G)=k$ if and only if $\textnormal{rd}(G)=k$.
\end{lem}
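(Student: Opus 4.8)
The plan is to trap both parameters in the two-element range $\{k,k+1\}$ and then show that they jump together. Since $G$ is connected and $k$-regular, Lemma \ref{rd-regu} gives $k\le \textnormal{rd}(G)\le k+1$, while Vizing's theorem gives $k=\Delta(G)\le \chi'(G)\le k+1$. Moreover, $k$-regularity with $k$ odd forces $|E(G)|=kn/2$ to be an integer, so $n$ is even, which is exactly what makes perfect matchings available. Finally, because $G$ is $k$-edge-connected and $k$-regular, the trivial cut $E_w$ shows $\lambda(w,z)=k$ for every pair $w,z$, so every minimum $w$-$z$-edge-cut has exactly $k$ edges.

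For the easy direction I would take a proper edge-coloring with $\chi'(G)=k$ colors. Then each trivial cut $E_w$ is rainbow and separates $w$ from every other vertex, so the coloring is already an rd-coloring; hence $\textnormal{rd}(G)\le k$ and therefore $\textnormal{rd}(G)=k$.

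The substance is the converse, which I would prove contrapositively: if $G$ is in Class $2$ (so $\chi'(G)=k+1$), then no $k$-coloring is an rd-coloring, whence $\textnormal{rd}(G)=k+1$. The key first observation is that with only $k$ colors a rainbow edge-cut has at most $k$ edges, while every $w$-$z$-edge-cut has at least $\lambda(w,z)=k$ edges; thus in any $k$-color rd-coloring each rainbow $w$-$z$-edge-cut must be a \emph{minimum} cut carrying exactly one edge of every color. I would then bring in the parity identity for odd cuts: for every $X$ with $|X|$ odd and every perfect matching $M$, the number of edges of $M$ in $\partial(X)$ is congruent to $|X|$ modulo $2$, hence odd and in particular nonzero. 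The aim is to run this in reverse, showing that the existence of one-edge-per-color rainbow minimum cuts between all pairs forces the $k$ color classes to behave like $k$ perfect matchings, so that $E(G)$ decomposes into $k$ $1$-factors and $\chi'(G)=k$, contradicting Class $2$.

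The main obstacle is precisely this passage from local to global. An rd-coloring with $k$ colors need not be proper at every vertex: a color may repeat at $w$ as long as some non-trivial minimum cut still provides a rainbow separation of $w$ from each partner, so one cannot simply read a $1$-factorization off the color classes. Making the parity argument bite therefore seems to require applying it to a carefully chosen family of minimum cuts at once, for instance replacing each pair by the $k$ edge-disjoint paths guaranteed by Menger's Proposition \ref{M}, each of which a rainbow minimum cut must cross in exactly one distinctly colored edge, and using the oddness of $k$ to prevent any color class from crossing an odd minimum cut an even number of times. Combining $k$-edge-connectivity (to control the minimum cuts, including the case where all minimum cuts are non-trivial) with the parity forced by $k$ being odd is, I expect, the crux of the whole argument.
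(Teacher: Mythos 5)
This lemma is quoted in the paper from \cite{BHL} and no proof of it appears in the text, so there is no in-paper argument to compare against; judged on its own terms, your proposal is incomplete. The easy direction is fine: a proper $k$-edge-coloring makes every trivial cut $E_w$ a rainbow $w$-$z$-edge-cut, and $\textnormal{rd}(G)\ge\lambda(G)=k$ gives equality. The setup for the converse is also sound: with only $k$ colors every rainbow $u$-$v$-edge-cut must be a minimum cut carrying exactly one edge of each color. But the converse itself is never proved. You state that the ``aim'' is to show the $k$ color classes behave like $k$ perfect matchings, you correctly identify the obstruction (an rd-coloring with $k$ colors need not be proper at a vertex, so the color classes need not be matchings), and then you stop at ``is, I expect, the crux of the whole argument.'' That crux \emph{is} the theorem: everything before it holds for the Petersen graph as well, which is $3$-edge-connected, $3$-regular and Class $2$, and for which one must actually rule out a $3$-color rd-coloring. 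Nothing in your sketch does so.

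Moreover, the parity tool you propose points the wrong way. The identity ``a perfect matching meets $\partial(X)$ in a number of edges congruent to $|X|$ modulo $2$'' presupposes that you already have a perfect matching, which is exactly what you are trying to manufacture from the color classes; as written the argument is circular. The parity fact that is actually available from the hypotheses is different: in a $k$-regular graph one has $|\partial(X)|\equiv k|X|\pmod 2$, so for $k$ odd every minimum edge-cut (of size $k$) has both sides of odd cardinality. That is the natural starting point for forcing structure on the rainbow minimum cuts (e.g.\ for showing that some vertex with a repeated color leads to a contradiction, or that every $E_v$ must in fact be rainbow, which would make the rd-coloring proper and give $\chi'(G)=k$ directly). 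Until that step is carried out, the nontrivial half of the equivalence remains unproved.
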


\begin{thm}\label{srd-thm-reg0}
Let $G$ be a $k$-edge-connected $k$-regular graph.
Then $k\leq \textnormal{srd}(G)\leq \chi'(G)$.
\end{thm}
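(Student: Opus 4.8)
The plan is to derive both bounds from a single structural fact: in a $k$-edge-connected $k$-regular graph, every trivial edge-cut is simultaneously a \emph{minimum} edge-cut separating its vertex from any other vertex. First I would dispose of the lower bound. Since $G$ is $k$-regular, every vertex has degree $k$, and removing the $k$ edges incident with a single vertex isolates it, so $\lambda(G)\leq k$; since $G$ is $k$-edge-connected, $\lambda(G)\geq k$. Hence $\lambda(G)=k$, and Proposition \ref{srd-pro0} immediately yields $k=\lambda(G)\leq \textnormal{srd}(G)$.

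For the upper bound, the key observation I would make is that $\lambda_G(u,v)=k$ for \emph{every} pair of distinct vertices $u$ and $v$. Indeed, $k$-edge-connectivity gives $\lambda_G(u,v)\geq k$ (equivalently, by Proposition \ref{M}, there are at least $k$ pairwise edge-disjoint $u$-$v$-paths), while the trivial edge-cut $E_u$ consisting of the $k$ edges incident with $u$ separates $u$ from $v$ and has exactly $k$ edges, so $\lambda_G(u,v)\leq k$. Thus every minimum $u$-$v$-edge-cut has size $k$, and in particular $E_u$ is itself a minimum $u$-$v$-edge-cut. I would then take a proper edge-coloring $c$ of $G$ with $\chi'(G)$ colors, which exists by definition of the chromatic index. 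Because all edges of $E_u$ share the endpoint $u$, they are pairwise adjacent, and so under the proper coloring $c$ they receive pairwise distinct colors; that is, $E_u$ is rainbow. Consequently, for every pair $u,v$, the set $E_u$ is a rainbow minimum $u$-$v$-edge-cut, which shows that $c$ is an srd-coloring and hence $\textnormal{srd}(G)\leq \chi'(G)$.

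I do not expect a genuine obstacle here, as the statement is a fairly direct consequence of the two hypotheses. The only point requiring care is the joint use of $k$-regularity and $k$-edge-connectivity to pin down $\lambda_G(u,v)=k$, which is precisely what promotes each trivial edge-cut $E_u$ to a minimum $u$-$v$-edge-cut for all $v$; neither hypothesis alone suffices, since regularity alone cannot guarantee $\lambda_G(u,v)\geq k$ and connectivity alone cannot guarantee $\lambda_G(u,v)\leq k$. Once this equality is in hand, the rainbowness of trivial cuts under a proper coloring is automatic, and the bound follows.
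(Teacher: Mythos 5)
Your proof is correct and follows essentially the same route as the paper: the lower bound comes from $\lambda(G)=k$ together with Proposition \ref{srd-pro0}, and the upper bound comes from observing that under a proper edge-coloring every trivial cut $E_u$ is rainbow and, by the combination of $k$-regularity and $k$-edge-connectivity, is a minimum $u$-$v$-edge-cut. You merely make explicit the verification that the paper leaves as ``one may easily verify.''
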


\begin{proof}
It follows from Proposition \ref{srd-pro0} that $\textnormal{srd}(G)\geq k$. Let $u$, $v$ be two vertices of $G$. Using the fact that $G$ is a $k$-edge-connected $k$-regular graph, one may easily verify that the set $E_v$ of edges incident with $v$ is a rainbow minimum $u$-$v$-edge-cut under a proper edge-coloring of $G$.
Hence, $\textnormal{srd}(G)\leq \chi'(G)$.
\end{proof}

\begin{thm}\label{srd-thm-reg}
Let $k$ be an odd integer, $G$ a $k$-edge-connected $k$-regular graph. Then $\textnormal{srd}(G)=k$ if and only if $\textnormal{rd}(G)=k$.
\end{thm}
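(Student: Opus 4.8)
The plan is to derive both implications directly by chaining the results already established, since for odd $k$ the parameters $\textnormal{rd}(G)$, $\textnormal{srd}(G)$ and $\chi'(G)$ are all squeezed together between $k$ and $k+1$. The two key bridges are Lemma \ref{rd-lem22}, which links $\textnormal{rd}(G)$ to the chromatic index via $\chi'(G)=k \iff \textnormal{rd}(G)=k$, and Theorem \ref{srd-thm-reg0}, which gives the sandwich $k\leq \textnormal{srd}(G)\leq \chi'(G)$ for any $k$-edge-connected $k$-regular graph.

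For the forward implication, I would assume $\textnormal{srd}(G)=k$. By Proposition \ref{srd-pro0} we have $\textnormal{rd}(G)\leq \textnormal{srd}(G)=k$, and since $G$ is a connected $k$-regular graph, Lemma \ref{rd-regu} gives $\textnormal{rd}(G)\geq k$. Combining these two inequalities forces $\textnormal{rd}(G)=k$, so this direction needs no coloring construction at all.

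For the converse, I would assume $\textnormal{rd}(G)=k$. Because $k$ is odd and $G$ is $k$-edge-connected $k$-regular, Lemma \ref{rd-lem22} applies and yields $\chi'(G)=k$. Feeding this into Theorem \ref{srd-thm-reg0} gives $k\leq \textnormal{srd}(G)\leq \chi'(G)=k$, whence $\textnormal{srd}(G)=k$. Thus both directions reduce to short inequality manipulations.

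The only subtle point worth flagging is that every invocation genuinely requires the hypotheses in play: Lemma \ref{rd-lem22} is stated precisely for \emph{odd} $k$ and for $k$-edge-connected $k$-regular graphs, so the oddness of $k$ is essential in passing through $\chi'(G)$, and $k$-edge-connectedness is what makes the degree-cut $E_v$ a \emph{minimum} $u$-$v$-edge-cut in the argument underlying Theorem \ref{srd-thm-reg0}. There is no real obstacle to overcome here; the substantive work was already done in proving Lemma \ref{rd-lem22} and Theorem \ref{srd-thm-reg0}, and the present statement is essentially a corollary obtained by transitivity through $\chi'(G)$.
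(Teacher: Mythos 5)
Your proposal is correct and follows essentially the same route as the paper: the forward direction is the squeeze $k\leq\textnormal{rd}(G)\leq\textnormal{srd}(G)=k$ from Proposition \ref{srd-pro0}, and the converse chains Lemma \ref{rd-lem22} with Theorem \ref{srd-thm-reg0} to get $k\leq\textnormal{srd}(G)\leq\chi'(G)=k$. Your version merely spells out the inequalities that the paper cites in compressed form.
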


\begin{proof}
First, suppose $\textnormal{srd}(G)=k$. Since $\lambda(G)=k$, we have $\textnormal{rd}(G)=k$ by Proposition \ref{srd-pro0}.
Conversely, if $\textnormal{rd}(G)=k$, then we have $\textnormal{srd}(G)=k$ by Proposition \ref{srd-pro0} and Lemma \ref{rd-lem22} and Theorem \ref{srd-thm-reg0}.
\end{proof}

By Lemma \ref{rd-lem22} and Theorem \ref{srd-thm-reg}, we immediately get the following corollary.
\begin{cor}\label{srd-lem20}
Let $k$ be an odd integer, $G$ a $k$-edge-connected $k$-regular graph.
Then $\textnormal{srd}(G)=k$ if and only if $\chi'(G)=k$.
\end{cor}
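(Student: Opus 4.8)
The plan is to obtain this corollary purely by transitivity of biconditionals, since both of the needed equivalences have already been established for exactly the hypotheses in play (namely $k$ odd and $G$ a $k$-edge-connected $k$-regular graph). No new combinatorial construction or coloring argument is required here; the work has been front-loaded into the two preceding results.

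Concretely, I would first invoke Lemma \ref{rd-lem22}, which gives $\chi'(G)=k$ if and only if $\textnormal{rd}(G)=k$ under precisely these hypotheses. Next I would invoke Theorem \ref{srd-thm-reg}, which gives $\textnormal{rd}(G)=k$ if and only if $\textnormal{srd}(G)=k$ under the same hypotheses. Chaining these two equivalences through the common middle condition $\textnormal{rd}(G)=k$ yields
$$\chi'(G)=k \iff \textnormal{rd}(G)=k \iff \textnormal{srd}(G)=k,$$
and hence $\textnormal{srd}(G)=k$ if and only if $\chi'(G)=k$, as claimed.

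I should remark that there is essentially no obstacle in this argument: the only thing to verify is that the hypotheses of Lemma \ref{rd-lem22} and Theorem \ref{srd-thm-reg} coincide exactly with those of the corollary, which they do, so the transitive composition is legitimate. The substantive content of the result lives in Theorem \ref{srd-thm-reg} (whose forward direction uses $\lambda(G)=k$ together with Proposition \ref{srd-pro0}, and whose reverse direction combines Proposition \ref{srd-pro0}, Lemma \ref{rd-lem22}, and Theorem \ref{srd-thm-reg0} to sandwich $\textnormal{srd}(G)$ between $k$ and $\chi'(G)$), so the corollary itself is a bookkeeping step rather than a fresh proof.
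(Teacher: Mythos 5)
Your proposal is correct and matches the paper exactly: the paper derives this corollary by combining Lemma \ref{rd-lem22} and Theorem \ref{srd-thm-reg}, which is precisely the chaining of the two biconditionals through the middle condition $\textnormal{rd}(G)=k$ that you describe. Your remark about where the substantive content lives is also accurate.
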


The Cartesian product $G\Box H$ of two vertex-disjoint graphs $G$ and $H$ is the graph with vertex-set $V(G)\times V(H)$, where $(u,v)$ is adjacent to $(x,y)$ in $G\Box H$ if and only if either
$u=x$ and $vy\in E(H)$ or $ux\in E(G)$ and $v=y$. We consider the $m\times n$ grid graph $G_{m,n} = P_m \Box P_n$, which consists of $m$ horizontal paths $P_n$ and $n$ vertical paths $P_m$.
Now we determine the srd-number for grid graphs.

\begin{thm}\label{srd-thm0}
The $\textnormal{srd}$-number of the grid graph $G_{m,n}$ is as follows.

\textnormal{(i)} For all $n\geq 2$, $\textnormal{srd}(G_{1,n})=\textnormal{srd}(P_n) = 1$.

\textnormal{(ii)} For all $n\geq 3$, $\textnormal{srd}(G_{2,n})=2$.

\textnormal{(iii)} For all $n\geq 4$, $\textnormal{srd}(G_{3,n})=3$.

\textnormal{(iv)} For all $4\geq m\geq n$, $\textnormal{srd}(G_{m,n})=4$.
\end{thm}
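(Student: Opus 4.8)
The lower bound costs nothing: Proposition~\ref{srd-pro0} gives $\textnormal{srd}(G_{m,n})\ge\textnormal{rd}(G_{m,n})$, and the right-hand side equals the target value $1,2,3,4$ in the four cases by Lemma~\ref{rd-2}. So everything reduces to producing, in each case, an srd-colouring that uses exactly the target number of colours. Part~(i) is immediate, since $G_{1,n}=P_n$ is a tree and $\textnormal{srd}(P_n)=1$ by Corollary~\ref{srd-cor1}(i). For the other three parts I would set up one common framework and then specialize the colouring.

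The first step is to pin down which edge-sets can realize a minimum $u$-$v$-edge-cut in a grid. Writing $E_w$ for the trivial cut at a vertex $w$, and calling a \emph{straight cut} the set of $m$ horizontal edges crossing a fixed gap between two consecutive columns (or, symmetrically, the $n$ vertical edges across a row-gap), I would prove by a short max-flow argument that for every pair $u,v$ some minimum $u$-$v$-edge-cut lies in one of these two families; this structural fact is what turns the problem into a finite verification. Granting it, a colouring is an srd-colouring as soon as, for each pair, at least one minimum cut from these two families is rainbow. The design then faces two demands: (a) every straight cut that can be a minimum cut should be rainbow; and (b) the trivial cut $E_w$ at each low-degree (corner or boundary) vertex $w$ should be rainbow, since there the minimum cut is forced to be trivial.

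For the wide grids of part~(iv), where $\lambda^+(G_{m,n})=4$ and $\Delta=4$, I would take a proper $4$-edge-colouring, which exists because grids are bipartite; then every trivial cut is automatically rainbow, and since one checks that $\lambda(u,v)=\min\{d(u),d(v)\}$ for all pairs in this range, the smaller of $E_u,E_v$ is always a rainbow minimum cut. For the narrow grids of parts~(ii) and~(iii) the maximum degree exceeds the number of available colours, so no proper colouring can work and demand~(a) dominates: I would colour the horizontal edges by a shifted-periodic (``diagonal'') pattern so that the $m$ horizontal edges crossing each column-gap receive the $m$ distinct colours, making every vertical straight cut rainbow, and then give each vertical edge, where a colour is still available, one distinct from the two horizontal edges flanking it, which makes $E_w$ rainbow at most boundary vertices.

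The main obstacle is the verification: confirming that under the prescribed colouring every pair really does admit a rainbow minimum cut. I would organize it by the positions of $u$ and $v$ (corner, boundary, interior; same column, same row, or general), naming in each case the cut to be used. The delicate pairs are exactly those with $\lambda(u,v)<\min\{d(u),d(v)\}$, where no trivial cut is minimum and one is forced onto a straight cut; the ``diagonal'' colouring was engineered precisely so that these straight cuts are rainbow. A recurring subtlety is that the trivial cut at a boundary vertex may fail to be rainbow, in which case one exploits the freedom to replace it by an equally small straight cut; here Lemma~\ref{srd-lem21} is convenient, since after shrinking one side of a straight cut to a single vertex a minimum cut in the quotient is still minimum in $G_{m,n}$, keeping the bookkeeping finite. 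Once all pair types are dispatched, the colourings yield $\textnormal{srd}(G_{m,n})\le 2,3,4$ in the respective cases, meeting the lower bounds and completing the proof.
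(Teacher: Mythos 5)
Your overall strategy coincides with the paper's: the lower bounds come for free from Proposition~\ref{srd-pro0} and Lemma~\ref{rd-2}, part (i) is Corollary~\ref{srd-cor1}, part (iv) is handled by a proper $4$-edge-colouring of the bipartite grid together with $\lambda(u,v)=\min\{d(u),d(v)\}$, and parts (ii)--(iii) are handled by ``diagonal'' colourings designed so that the parallel edges between two consecutive columns form a rainbow cut and the trivial cuts at low-degree vertices are rainbow. Your additional structural claim (every pair admits a minimum cut that is either trivial or a straight column/row cut) is only implicit in the paper, which instead exhibits, for each type of pair, one specific minimum cut and checks that it is rainbow; your version is a reasonable organizing device but is asserted rather than proved, and the verification only ever needs the exhibited cuts anyway.

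The genuine problem is part (ii). For $n\geq 3$ and an interior column $2\leq j\leq n-1$, the adjacent pair $u=x_{1,j}$, $v=x_{2,j}$ of $G_{2,n}$ satisfies $\lambda(u,v)=3$: the edge $uv$ together with the two detours through columns $j-1$ and $j+1$ give three pairwise edge-disjoint $u$-$v$-paths, and $d(u)=d(v)=3$. No column cut separates $u$ from $v$ (they lie in the same column), so every minimum $u$-$v$-edge-cut has three edges, and a rainbow one requires three distinct colours. Hence no $2$-edge-colouring of $G_{2,n}$ can be an srd-colouring, and the $2$-colouring you promise in case (ii) cannot exist. Note that the paper's own proof of (ii) silently uses the three colours of $Z_3$ and verifies that the size-$3$ trivial cuts $E_u$ are rainbow; what it actually establishes is $\textnormal{srd}(G_{2,n})\leq 3$, and since $\lambda^{+}(G_{2,n})=3$ forces $\textnormal{srd}(G_{2,n})\geq 3$ by Proposition~\ref{srd-pro0}, the attainable value in (ii) is $3$, not $2$. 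So your plan for (ii) fails, but it fails because the target value itself (inherited from the quoted Lemma~\ref{rd-2}(ii)) is unattainable; for (iii) and (iv) your colourings and verifications match the paper's and go through.
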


\begin{proof}
First, it follows from Proposition \ref{srd-pro0} and Lemma \ref{rd-2} that the lower bounds on $\textnormal{srd}(G_{m,n})$ in (i)-(iv) hold. It remains to show that the upper bound on $\textnormal{srd}(G_{m,n})$ in each of (i)-(iv) also holds.

\textnormal{(i)} We get $\textnormal{srd}(G_{1,n})=\textnormal{srd}(P_n) = 1$ by Corollary \ref{srd-cor1}.

For the rest of the proof, the vertices of $G_{m,n}$ are regarded as a matrix. Let $x_{i,j}$ be the vertex in row $i$ and column $j$, where $1\leq i\leq m$ and $1\leq j\leq n$.

\textnormal{(ii)} We give the same edge-coloring $c$ for $G_{2,n}$ ($n\geq 3$) using colors from the elements of $Z_3$ of the integer modulo  3 as in Lemma \ref{rd-2} (ii).
Define the edge-coloring $c$ for $G_{2,n}$: $c\rightarrow Z_3 $, and we now restate it as follows.

$\bullet$ $c(x_{i,j}x_{i,j+1})=i+j+1$ for $1\leq i\leq 2$ and $1\leq j\leq n-1$;

$\bullet$ $c(x_{1,j}x_{2,j})=j$ for $1\leq j\leq n-1$.

One can verify that the coloring $c$ is an srd-coloring for $G_{2,n}$.
Let $u$ and $v$ be two vertices of $G_{2,n}$.
If $u$ and $v$ are in different columns,
then two parallel edges between $u$ and $v$ joining vertices in the same two columns form a rainbow minimum $u$-$v$-edge-cut in $G_{2,n}$ since $\lambda(u,v)=2$.
Suppose $u$ and $v$ are in the same column. Because the set $E_u$ of edges incident with $u$ is rainbow and $\lambda(u,v)=d(u)=d(v)$, the set $E_u$ of edges incident with $u$ is a rainbow minimum $u$-$v$-edge-cut in $G_{2,n}$.

\textnormal{(iii)} Give the same edge-coloring $c$ as for the graph $G_{3,n}$ ($n\geq 3$) in Lemma \ref{rd-2} (iii). Again we use the elements of $Z_3$  as the colors here.
Define the edge-coloring $c$ for $G_{3,n}$: $c\rightarrow Z_3 $ as follows.

$\bullet$ $c(x_{i,j}x_{i,j+1})=i+j+1$ for $1\leq i\leq 3$ and $1\leq j\leq n-1$;

$\bullet$ $c(x_{1,j}x_{2,j})=j$ for $1\leq j\leq n-1$;

$\bullet$ $c(x_{2,j}x_{3,j})=j+2$ for $1\leq j\leq n-1$.

Now we show that the coloring $c$ is an srd-coloring of $G_{3,n}$.
Observe that the set $E_x$ of edges incident with $x$ is rainbow for each vertex $x$ with $d(x)\leq 3$ in $G_{3,n}$ under the coloring $c$.
Let $u$ and $v$ be two vertices of $G_{3,n}$.
If $u$ and $v$ have at most one vertex with degree $4$, without loss of generality, $2\leq d(u)\leq d(v)\leq 4$, then the set $E_u$ of edges incident with $u$ is a rainbow minimum $u$-$v$-edge-cut in $G_{3,n}$ since $\lambda(u,v)=d(u)$.
If $d(u)=d(v)=4$, then three parallel edges between $u$ and $v$ joining vertices in the same two columns form a rainbow minimum $u$-$v$-edge-cut in $G_{3,n}$ since $\lambda(u,v)=3$.

\textnormal{(iv)}
For the graph $G_{m,n}$ ($4\leq m\leq n$), because $G_{m,n}$ is bipartite and $\Delta(G_{m,n})=4$, there exists a proper edge-coloring $c$ using 4 colors. Now we show that the $c$ is an
srd-coloring of $G_{m,n}$.
Let $u$ and $v$ be two vertices of $G_{m,n}$.
Suppose $d(u)\leq d(v)$. Then the set $E_u$ of edges incident with $u$ is a rainbow minimum $u$-$v$-edge-cut in $G_{m,n}$ ($4\leq m\leq n$) since $\lambda(u,v)=d(u)$.
\end{proof}

\section{Hardness results}

First, we show that our problem is in NP for any fixed integer $k$.

\begin{lem}\label{srd-lem-P}
For a fixed positive integer $k$, given a $k$-edge-colored graph $G$,
deciding whether $G$ is a strong rainbow disconnected under the coloring is
in $P$.
\end{lem}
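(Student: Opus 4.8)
The plan is to give, for the fixed constant $k$, a polynomial-time decision procedure that follows the definition directly: $G$ is strong rainbow disconnected under the given coloring if and only if for every pair of distinct vertices $u,v$ there is a $u$-$v$-edge-cut that is simultaneously rainbow and of minimum cardinality. Since there are only $\binom{n}{2}=O(n^{2})$ pairs, it suffices to handle a single pair $(u,v)$ in polynomial time and then iterate over all pairs, accepting exactly when every pair succeeds.

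First I would compute $\lambda(u,v)$. By Proposition \ref{M}, $\lambda(u,v)$ equals the maximum number of pairwise edge-disjoint $u$-$v$-paths, so it is the value of a unit-capacity maximum flow from $u$ to $v$ and can be found in polynomial time by any standard max-flow algorithm.

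The crucial observation is that a rainbow edge-cut has pairwise distinct colors and therefore contains at most $k$ edges. Hence if $\lambda(u,v)>k$, every minimum $u$-$v$-edge-cut has more than $k$ edges and cannot be rainbow, so the procedure immediately reports that $G$ is not strong rainbow disconnected. If instead $\lambda(u,v)=\lambda\le k$, I would search for a rainbow minimum cut by brute force over edge subsets of size $\lambda$: enumerate all $\binom{m}{\lambda}\le\binom{m}{k}=O(m^{k})$ subsets $R\subseteq E(G)$ with $|R|=\lambda$, and for each test (a) whether $R$ is rainbow, i.e. the colors of its edges are pairwise distinct, and (b) whether $u$ and $v$ lie in different components of $G-R$, which a single BFS/DFS settles in $O(n+m)$ time. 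Any $R$ passing both tests has size exactly $\lambda(u,v)$ and separates $u$ from $v$, hence is automatically a minimum $u$-$v$-edge-cut, and so it is a \emph{rainbow minimum} $u$-$v$-edge-cut; if some such $R$ is found the pair $(u,v)$ is good, otherwise no rainbow minimum $u$-$v$-edge-cut exists and the procedure rejects.

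Putting the pieces together, the whole test runs in time $O\!\left(n^{2}\bigl(T_{\mathrm{flow}}+m^{k}\cdot(n+m)\bigr)\right)$, where $T_{\mathrm{flow}}$ is the cost of one max-flow computation; since $k$ is a fixed constant this is polynomial in the size of $G$, proving membership in $P$. I do not expect a genuine obstacle here: the one place where the argument could break is the size of the family of minimum cuts, which may be exponential in general, but fixing $k$ together with the bound $|R|\le k$ on rainbow cuts collapses the enumeration to the polynomial bound $O(m^{k})$, and this is precisely the step for which the hypothesis ``$k$ fixed'' is needed.
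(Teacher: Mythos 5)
Your proof is correct and follows essentially the same approach as the paper: both arguments rest on the observation that a rainbow edge-cut has at most $k$ edges, so for fixed $k$ one can enumerate the polynomially many candidate subsets (you enumerate size-$\lambda(u,v)$ subsets after a max-flow computation, the paper enumerates all rainbow subsets of size at most $k$ and compares against the minimum cut size $l_0$), and then iterate over all $\binom{n}{2}$ pairs. The early rejection when $\lambda(u,v)>k$ is a nice explicit touch but does not change the substance of the argument.
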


\begin{proof}
Let $n$, $m$ be the number of the vertices and edges of $G$, respectively.
Let $u$, $v$ be two vertices of $G$.
Because $G$ has at most $k$ colors, we have at most $\sum_{l=1}^k {m\choose l}$ rainbow edge subsets in $G$, denoted the set of the subsets by $\mathcal{S}$. One can see that this number is upper bounded by a polynomial in $m$ when $k$ is a fixed integer (say $km^k$, roughly speaking). Given a rainbow subset of edges $S\in \mathcal{S}$, it is checkable in polynomial time to decide whether $S$ is a $u$-$v$-edge-cut of $G$, just to see whether $u$ and $v$ lie in different components of $G\setminus S$, and the number of components is a polynomial in $n$.
If each rainbow subset in $\mathcal{S}$ is not a $u$-$v$-edge-cut in $G$,
then the coloring is not an srd-coloring of $G$, which can be checked in polynomial time since the number of such subsets is polynomial many in $m$. Otherwise, let the integer $l_0(\leq k)$ be the minimum size of a $u$-$v$-edge-cut in $G$, and this $l_0$ can be computed in polynomial time. Then, if one of the rainbow subsets of $\mathcal{S}$ is a $u$-$v$-edge-cut of $G$ with size $l_0$, then it is a rainbow minimum $u$-$v$-edge-cut of $G$, which can be done in polynomial time since
the number of such subsets is polynomial many in $m$.
Otherwise, the coloring is not an srd-coloring. Moreover, there are at most ${n\choose 2}$ pairs of vertices in $G$. Since $k$ is an integer, we can deduce that deciding wether a $k$-edge-colored graph $G$ is strong rainbow disconnected can be checked in polynomial time.
\end{proof}

In particular, it is $NP$-complete to determine whether $\textnormal{srd}(G)=3$ for a cubic graph. We first restate the
following result as a lemma.

\begin{lem}{\upshape\cite{BCHL}}\label{srd-lem4}
It is $NP$-complete to determine whether the \textnormal{rd}-number of a cubic is $3$ or $4$.
\end{lem}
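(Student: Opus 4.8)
The plan is to prove both that the problem lies in $NP$ and that it is $NP$-hard, obtaining $NP$-completeness, with the hardness coming from a reduction from the classical theorem of Holyer that deciding $3$-edge-colorability of a cubic graph is $NP$-complete. For membership in $NP$, the key observation is that with only $3$ available colors every rainbow edge set has at most three edges; hence a rainbow $u$-$v$-edge-cut, if one exists, consists of at most three edges. Given a $3$-edge-coloring as a certificate, one can therefore verify that it is an \textnormal{rd}-coloring by enumerating, for each of the $O(n^2)$ pairs $(u,v)$, all $O(m^3)$ edge-subsets of size at most three and checking in polynomial time whether any such subset is simultaneously rainbow and a $u$-$v$-edge-cut. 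This is the same idea used in Lemma \ref{srd-lem-P}, and since $\textnormal{rd}(G)\ge 3$ always holds for a connected cubic graph by Lemma \ref{rd-regu}, the question ``is $\textnormal{rd}(G)=3$ or $4$'' is exactly the question whether such a $3$-coloring exists, which is in $NP$.

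For hardness I would exploit the structural facts already available for cubic graphs. By Lemma \ref{rd-regu} a connected cubic graph $G$ satisfies $3\le \textnormal{rd}(G)\le 4$, so deciding whether $\textnormal{rd}(G)$ equals $3$ or $4$ amounts to deciding whether $\textnormal{rd}(G)=3$. The heart of the reduction is the equivalence $\textnormal{rd}(G)=3 \iff \chi'(G)=3$. One direction is immediate: if $\chi'(G)=3$, fix a proper $3$-edge-coloring; then for every vertex $w$ the three edges of the trivial cut $E_w$ receive distinct colors, so for any pair $u,v$ the rainbow star $E_u$ is a rainbow $u$-$v$-edge-cut, whence the coloring is an \textnormal{rd}-coloring and $\textnormal{rd}(G)\le 3$. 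Combined with the lower bound of Lemma \ref{rd-regu} this yields $\textnormal{rd}(G)=3$.

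The reverse direction $\textnormal{rd}(G)=3\Rightarrow\chi'(G)=3$ is where the odd-regular machinery enters: by Lemma \ref{rd-lem22} applied with $k=3$, a $3$-edge-connected cubic graph satisfies $\chi'(G)=3$ if and only if $\textnormal{rd}(G)=3$. Thus, if the reduction is confined to $3$-edge-connected cubic graphs, deciding $\textnormal{rd}(G)=3$ coincides exactly with deciding $\chi'(G)=3$, and the identity map $G\mapsto G$ becomes a polynomial reduction. Since Holyer's theorem remains $NP$-complete when restricted to $3$-connected (hence $3$-edge-connected) cubic graphs, this establishes $NP$-hardness, and together with membership gives $NP$-completeness.

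The main obstacle I anticipate is precisely the connectivity hypothesis needed to invoke Lemma \ref{rd-lem22}: the clean equivalence between $\textnormal{rd}(G)=3$ and $\chi'(G)=3$ is only guaranteed for $3$-edge-connected cubic graphs, whereas a generic cubic graph may contain $2$-edge-cuts (and a cubic graph with a bridge is never $3$-edge-colorable, so such instances trivialize one side). The real work therefore lies in arranging that the Holyer instances can be taken to be $3$-edge-connected---equivalently, in using a version of Holyer's reduction whose output is $3$-connected cubic---and in checking that the reverse implication holds without exception on that class. Once the reduction target is restricted in this way, the remaining verifications are routine.
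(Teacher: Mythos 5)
This lemma is stated in the paper as a quoted result from \cite{BCHL} with no proof given here, so there is nothing internal to compare against; your reconstruction --- membership in $NP$ by enumerating the polynomially many candidate rainbow cuts of size at most $3$, and hardness via the equivalence $\textnormal{rd}(G)=3\iff\chi'(G)=3$ on $3$-edge-connected cubic graphs (Lemma \ref{rd-lem22} with $k=3$) combined with Holyer's theorem --- is exactly the argument the cited source uses and the one this paper's Theorem \ref{srd-thm1} implicitly relies on when it invokes ``the proof of Lemma \ref{srd-lem4}'' for $3$-edge-connected cubic graphs. The single dependency you leave open, namely that $3$-edge-colorability remains $NP$-complete when restricted to $3$-connected cubic graphs, is a known strengthening of Holyer's construction, and you correctly identify it as the only point requiring an external citation rather than new work.
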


\begin{thm}\label{srd-thm1}
It is $NP$-complete to determine whether the \textnormal{srd}-number of a
cubic is $3$ or $4$.
\end{thm}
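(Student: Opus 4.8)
The plan is to establish membership in NP from the polynomial-time checkability already proved, and then to obtain NP-hardness by observing that on a suitable subclass of cubic graphs the value of $\textnormal{srd}$ is pinned down by the edge-chromatic class, so that deciding $\textnormal{srd}(G)=3$ coincides with the NP-hard problem behind Lemma \ref{srd-lem4}.

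For membership, I would invoke Lemma \ref{srd-lem-P} with $k=3$: given a $3$-edge-coloring of $G$, one can decide in polynomial time whether it is an srd-coloring. Hence a witnessing $3$-edge-coloring is a polynomial certificate for ``$\textnormal{srd}(G)\le 3$'', which places the problem in NP. On the instances we will reduce to, namely $3$-edge-connected cubic graphs, Proposition \ref{srd-pro0} gives $\textnormal{srd}(G)\ge\lambda(G)=3$, so there ``$\textnormal{srd}(G)\le 3$'' and ``$\textnormal{srd}(G)=3$'' coincide and the two-valued question is well posed.

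For hardness, the key step is to restrict attention to $3$-edge-connected cubic graphs $G$. For such $G$, Theorem \ref{srd-thm-reg0} yields $3\le\textnormal{srd}(G)\le\chi'(G)\le\Delta(G)+1=4$, so $\textnormal{srd}(G)\in\{3,4\}$. Since $3$ is odd, Corollary \ref{srd-lem20} gives $\textnormal{srd}(G)=3$ if and only if $\chi'(G)=3$, while Lemma \ref{rd-lem22} gives $\textnormal{rd}(G)=3$ if and only if $\chi'(G)=3$; chaining these equivalences produces $\textnormal{srd}(G)=3\Leftrightarrow\textnormal{rd}(G)=3$ for every $3$-edge-connected cubic graph. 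Consequently the reduction behind Lemma \ref{srd-lem4} transfers verbatim once we know its output graphs are $3$-edge-connected and cubic: instances with $\textnormal{rd}=3$ are exactly those with $\textnormal{srd}=3$, and instances with $\textnormal{rd}=4$ are exactly those with $\textnormal{srd}=4$. Combined with the NP membership above, this gives NP-completeness.

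The main obstacle I anticipate is justifying the $3$-edge-connectivity of the instances, since Corollary \ref{srd-lem20} and Lemma \ref{rd-lem22} are stated only in the $k$-edge-connected $k$-regular setting. The cleanest route is to recognize that the true source of hardness is deciding whether a cubic graph is of Class $1$, i.e.\ $\chi'(G)=3$, which is already NP-complete for $3$-connected (hence $3$-edge-connected) cubic graphs by Holyer's theorem; feeding such instances into the equivalence $\textnormal{srd}(G)=3\Leftrightarrow\chi'(G)=3$ of Corollary \ref{srd-lem20} gives the reduction directly and avoids any dependence on the internal structure of the construction used for Lemma \ref{srd-lem4}. If one prefers to use Lemma \ref{srd-lem4} as a black box instead, one must first check that its reduction graphs are $3$-edge-connected; this is consistent with the fact that Lemma \ref{srd-lem4} is itself proved through Lemma \ref{rd-lem22}, which applies only in the $3$-edge-connected case, so the needed property should already be present in that construction.
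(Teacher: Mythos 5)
Your proposal is correct and follows essentially the same route as the paper: NP membership via Lemma \ref{srd-lem-P}, and NP-hardness by restricting to $3$-edge-connected cubic graphs, where the equivalence $\textnormal{srd}(G)=3\Leftrightarrow\textnormal{rd}(G)=3\Leftrightarrow\chi'(G)=3$ (Theorem \ref{srd-thm-reg}, Corollary \ref{srd-lem20}, Lemma \ref{rd-lem22}) reduces the question to the hardness behind Lemma \ref{srd-lem4}. You are in fact more explicit than the paper about the one point it glosses over, namely that the hard instances must be $3$-edge-connected for these equivalences to apply; your fallback to Holyer's theorem on $3$-connected cubic graphs is a clean way to discharge that obligation.
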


\begin{proof}
The problem is in NP from Lemma \ref{srd-lem-P}.
Furthermore, we get that it is NP-hard to determine whether the srd-number
of a 3-edge-connected cubic is $3$ or $4$ by Theorem \ref{srd-thm-reg} and
the proof of Lemma \ref{srd-lem4}.
\end{proof}

Lemma \ref{srd-lem-P} tells us that deciding whether a given $k$-edge-colored
graph $G$ is strong rainbow disconnected for a fixed integer $k$ is in P. However, it is NP-complete to decide whether a given edge-colored (with an unbounded number of colors) graph is strong rainbow disconnected.

\begin{thm}\label{srd-thm2}
Given an edge-colored graph $G$ and two vertices
$s, t$ of $G$, deciding whether there is a rainbow minimum $s$-$t$-edge-cut is NP-complete.
\end{thm}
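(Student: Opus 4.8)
The plan is to prove membership in NP and then NP-hardness by a reduction from $3$-SAT.

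For membership, given a candidate edge set $R$ one can check in polynomial time that it is a rainbow minimum $s$-$t$-edge-cut: first compute $\lambda_G(s,t)$ by a single max-flow computation (legitimate by Proposition \ref{M}); then verify that $|R|=\lambda_G(s,t)$, that $s$ and $t$ lie in different components of $G-R$ (one scan of the components), and that no color is repeated in $R$. A nondeterministically guessed $R$ thus certifies a YES-instance, so the problem is in NP.

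For hardness, I would reduce from $3$-SAT. Given a formula $\phi$ with variables $x_1,\dots,x_n$ and clauses $C_1,\dots,C_m$, build an edge-colored graph $G_\phi$ that is an ``$s$-$t$ parallel composition'' of $n+m$ gadgets sharing only the two terminals $s,t$. For each clause $C_j=(\ell_a\vee\ell_b\vee\ell_c)$ take a path of length three from $s$ to $t$ whose edges receive three private colors $c_j(\ell_a),c_j(\ell_b),c_j(\ell_c)$; its minimum $s$-$t$-cut consists of exactly one of these three edges, so it selects one ``witness literal.'' For each variable $x_i$ take a gadget that is two edge-bundles in series, from $s$ to $q_i$ and from $q_i$ to $t$: cutting the first bundle (encoding $x_i=\text{false}$) uses all colors $c_j(x_i)$ over the clauses $j$ containing the literal $x_i$, while cutting the second bundle (encoding $x_i=\text{true}$) uses all colors $c_j(\bar x_i)$, each bundle padded with private dummy colors so that the two bundles have a common size $K_i$ and the gadget's minimum cut is exactly one full bundle. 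To keep $G_\phi$ simple, each multi-edge is replaced by a path of length two whose two edges share its intended color. Colors are taken from pairwise disjoint palettes across distinct variables, so the only color that can repeat in a cut is a pair consisting of clause $C_j$'s witness edge $c_j(\ell)$ together with the variable-bundle edge of the same color, and this repetition occurs precisely when the chosen witness literal $\ell$ is \emph{false} under the assignment encoded by the variable gadgets.

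The correspondence is then as follows: a minimum $s$-$t$-cut of $G_\phi$ chooses a minimum cut inside each gadget, so it encodes a truth assignment $\alpha$ (from the variable gadgets) together with one witness literal per clause; it is rainbow if and only if every clause's witness is true under $\alpha$, that is, if and only if $\alpha$ satisfies $\phi$. Since the witnesses may be selected independently per clause, $G_\phi$ has a rainbow minimum cut if and only if $\phi$ is satisfiable, and the construction is clearly polynomial. The main obstacle is the cut-structure lemma underlying this correspondence: I must prove that $\lambda_{G_\phi}(s,t)$ equals the fixed value $\sum_i K_i + m$ for \emph{every} assignment, and that each minimum $s$-$t$-cut is exactly a product of one gadget-minimum cut per gadget, with no ``crossing'' cut being cheaper. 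This I would establish through Proposition \ref{M}, by exhibiting $\sum_i K_i + m$ pairwise edge-disjoint $s$-$t$-paths (each lying inside a single gadget) and arguing that any $s$-$t$-cut must separate $s$ and $t$ within each gadget separately, so that equality forces every restriction to be a gadget-minimum cut. Combined with membership in NP, this yields NP-completeness.
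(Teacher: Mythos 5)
Your proposal is correct, but the reduction is genuinely different from the paper's. The paper builds a single intertwined graph: the truth value of $x_i$ is encoded by which of two families of selector paths ($s$--$p_{i,l}$--$x_{i,0}$ versus $s$--$q_{i,l}$--$x_{i,1}$) the cut severs, enforced by giving all four edges of each occurrence the \emph{same} color so that a rainbow cut can destroy only one side; clause satisfaction is enforced by shared colors $r_{i,4},r_{i,5}$ that allow at most two of the three literal-paths into $c_{i,0}$ to be cut; and a large clique $K_{6m+2}$ containing $t$ and all the $c_{i,0}$ pins the minimum cut (of size $6m$) to the variable/clause side. Your construction instead is a clean parallel composition of independent gadgets meeting only at $s$ and $t$: a length-three path per clause whose single cut edge names a witness literal, and a two-bundles-in-series gadget per variable whose cut bundle names a truth value, with the rainbow condition failing exactly when a witness color collides with the bundle encoding that literal's falsity. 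What your route buys is a much easier cut-structure analysis --- the value $\lambda(s,t)=\sum_i K_i+m$ and the product structure of minimum cuts follow immediately from Menger applied gadget-by-gadget, since every $s$-$t$-path lies inside a single gadget --- and a single, transparent mechanism (color collision $=$ false witness) carrying all the logic; the paper's version concentrates the difficulty instead in verifying that the minimum cut has size $6m$ and lies where intended. The one point you correctly flag as needing proof, that no ``crossing'' cut is cheaper, is indeed immediate in your setup because the gadgets share only the terminals; you should also note the standard normalization of the 3SAT instance (no clause containing a literal twice or a complementary pair) so that the three clause-edge colors are distinct, but this is routine.
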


\begin{proof}
Clearly, the problem is in NP, since for a graph $G$ checking whether a given set of edges is a rainbow minimum $s$-$t$-edge-cut in $G$ can be done in polynomial time, just to see whether it is an $s$-$t$-edge-cut and it has the minimum size $\lambda_G(s,t)$ by solving the maximum flow problem. We exhibit a polynomial reduction from the problem 3SAT.
Given a 3CNF for $\phi=\wedge_{i=1}^{m}c_i$ over variables
$x_1,x_2,\ldots,x_n$, we construct a graph $G_\phi$
with two special vertices $s,t$ and an edge-coloring $f$ such that
there is a rainbow minimum $s$-$t$-edge-cut in
$G_{\phi}$ if and only if $\phi$ is satisfiable.

We define $G_\phi$ as follows:

\[
\begin{aligned}
V(G_\phi)=
&    & &\{s,t\}
\cup\{x_{i,0},x_{i,1}|i\in [n]\}
\cup \{c_{i,j}|i\in[m],j\in\{0,1,2,3\}\}\\
&\cup& &\{p_{i,j},q_{i,j}|i\in [n],j\in[\ell_i]\}
\cup\{y_i|i\in [5m+1]\},
\end{aligned}
\]
where $\ell_i$ is the number of times of each variable $x_i$ appearing  among the clauses of $\phi$.
\[
\begin{aligned}
E(G_{\phi})=
&    & &\big\{sp_{i,l},sq_{i,l} \ | \ i\in [n],l\in [\ell_i]\big\}\\
&\cup& &\big\{p_{i,l}x_{i,0}, q_{i,l}x_{i,1} \ | \ i\in [n],l\in [\ell_i]\big\}\\
&\cup& &\big\{x_{j,0}c_{i,0}, c_{i,0}c_{i,k}, c_{i,k}x_{j,1} \ | \ \\
&    & & \text{if variable $x_j$ is positive in the $k$-th literal of  clause $c_i$,}\\
&    & &i\in [m], j\in[n], k\in\{1,2,3\}\big\}.\\
&\cup& &\big\{ x_{j,1}c_{i,0}, c_{i,0}c_{i,k}, c_{i,k}x_{j,0} \ | \ \\
&    & & \text{if variable $x_j$ is negative in the $k$-th literal of  clause $c_i$,}\\
&    & &i\in [m], j\in[n], k\in\{1,2,3\}\big\}.\\
&\cup& &\big\{E(K_{6m+2}) \ | \ V(K_{6m+2})=
\{c_{1,0},\ldots,c_{m,0},y_1,\ldots,y_{5m+1},t\}\big\}.
\end{aligned}
\]

The edge-coloring $f$ is defined as follows (see Figure \ref{srd-satisfy}):
\begin{itemize}
\item The edges $\big\{sp_{i,l}, p_{i,l}x_{i,0},sq_{i,l},q_{i,l}x_{i,1} \ | \ i\in [n],l\in [\ell_i]\big\}$ are
colored with a special color $r_{i,l}^0$.
\item The edge $x_{j,0}c_{i,0}$ or $x_{j,1}c_{i,0}$ is
colored with a special color $r_{i,k}$ when $x_j$
is the $k$-th literal of clause $c_i$,
$i\in [m], j\in[n], k\in\{1,2,3\}$.
\item The edge $c_{i,k}x_{j,0}$ or $c_{i,k}x_{j,1}$ is
colored with a special color $r_{i,4}$,
$i\in [m], j\in[n], k\in\{1,2,3\}$.
\item The edge $c_{i,k}c_{i,0}$ is colored with a
special color $r_{i,5}$, $i\in [m], k\in\{1,2,3\}$.
\item The remaining edges are colored with a special color $r_0$.
\end{itemize}

\begin{figure}[!htb]
\centering
\includegraphics[width=0.8\textwidth]{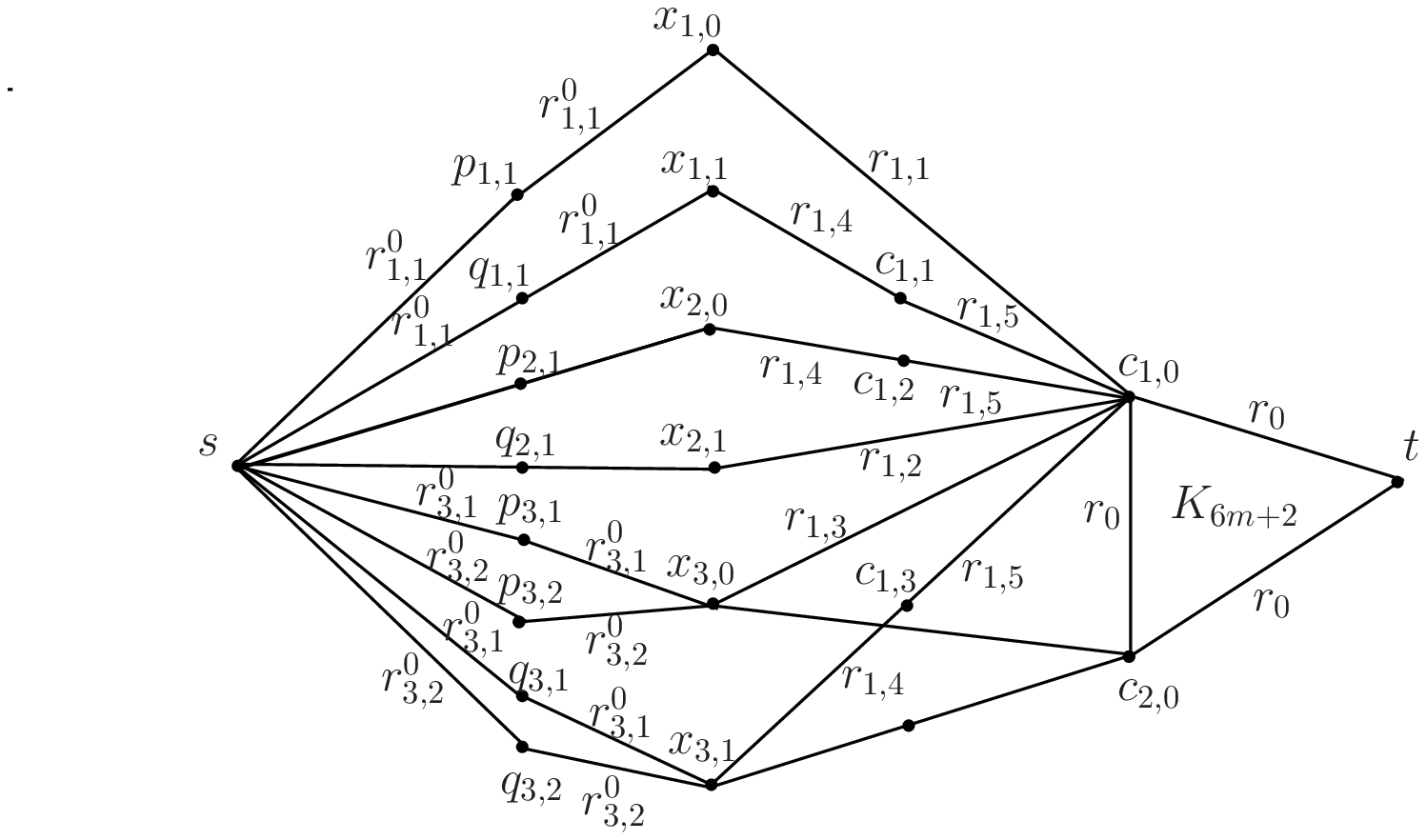}
\caption{The clause $c_1=(x_1,\overline{x_2},x_3)$ and the variable $x_3$ is in clause $c_1$ and $c_2$.}\label{srd-satisfy}
\end{figure}

Now we verify that there is a rainbow minimum $s$-$t$-edge-cut
in $G_{\phi}$ if and only if $\phi$ is satisfiable.

Assume that there exists a rainbow minimum $s$-$t$-edge-cut $S$ in $G_\phi$ under the coloring $f$, and let us show that $\phi$
is satisfiable.
Note that for each $j\in[n]$, $l\in l_j$, if $S$ has an edge in $\{sp_{j,l},p_{j,l}x_{j,0}\}$ (or $\{sq_{j,l},q_{j,l}x_{j,0}\}$), then a rainbow $s$-$x_{j,0}$(or $s$-$x_{j,1}$)-edge-cut in $G[s\cup x_{j,0}\cup \{p_{j,l}|l\in l_j\}]$ is in $S$, and no edge of $\{sq_{j,l},q_{j,l}x_{j,1}|l\in[l_j]\}$ (or $\{sp_{j,l},p_{j,l}x_{j,0}|l\in[l_j]\}$) is in $S$.
Otherwise, it contradicts to the assumption that $S$ is a rainbow minimum $s$-$t$-edge-cut in $G_\phi$.
For each $j\in [n]$, if a rainbow $s$-$x_{j,0}$-edge-cut in $G[s\cup x_{j,0}\cup \{p_{j,l}|l\in l_j\}]$ is in $S$ under the coloring $f$, then set $x_j=0$;
if a rainbow $s$-$x_{j,1}$-edge-cut in $G[s\cup x_{j,1}\cup \{q_{j,l}|l\in l_j\}]$ is in $S$ under the coloring $f$, then set $x_j=1$.
First, we have $|S|=6m$ and $S\subseteq G[V(G_\phi)\setminus\{y_1,\ldots,y_{5m+1},t\}]$.
Moreover, for given $c_{i,0}$ ($i\in[m]$),
we know that $S$ has at most two edges from three paths of length two between $c_{i,0}$ and $\{x_{j,0},x_{j,1}|x_j$ in $c_i$ and $j\in[n]\}$ under the coloring $f$ of $G_{\phi}$.
Suppose, without loss of generality, that the path of length two
between $x_{j,0}$ (or $x_{j,1}$) and $c_{i,0}$ has no
edge belonging to $S$ for some $j\in[n]$. If $x_j$ in $c_i$ is positive,
then there exists a rainbow $s$-$x_{j,1}$-edge-cut with size $\ell_j$ in $G[s\cup x_{j,1}\cup \{q_{j,l}|l\in l_j\}]$ belonging to $S$, where $i\in [m]$, $j\in[n]$.
Then $x_j=1$ and $c_i$ is satisfiable.
If $x_j$ in $c_i$ is negative, then there exists a rainbow $s$-$x_{j,0}$-edge-cut with size $\ell_j$ in $G[s\cup x_{j,1}\cup \{p_{j,l}|l\in l_j\}]$ belonging to $S$, where $i\in [m]$, $j\in[n]$.
Then $x_j=0$ and $c_i$ is satisfiable. Since this is true for each $c_i$ ($i\in [m]$), we get that $\phi$ is satisfiable.

Now suppose $\phi$ is satisfiable, and let us construct a rainbow minimum $s$-$t$-edge-cut in $G_\phi$ under the coloring $f$.
First, there exists a satisfiable assignment of $\phi$.
If $x_j=0$, we put the rainbow $s$-$x_{j,0}$-edge-cut in $G[s\cup x_{j,0}\cup \{p_{j,l}|l\in l_j\}]$ into $S$ for each $j\in[n]$.
If the vertex $x_{j,0}$ is adjacent to $c_{i,0}$,
then let one edge of $c_{i,k}x_{j,1},c_{i,k}c_{i,0}$ be
in $S$ for each $i\in [m], j\in[n], k\in\{1,2,3\}$.
If the vertex $x_{j,0}$ is adjacent to $c_{i,k}$,
then let the edge $x_{j,1}c_{i,0}$ be in $S$ for each
$i\in [m], j\in[n], k\in\{1,2,3\}$.
If $x_j=1$, we put the rainbow $s$-$x_{j,1}$-edge-cut in $G[s\cup x_{j,1}\cup \{q_{j,l}|l\in l_j\}]$ into $S$ for each $j\in[n]$.
If the vertex $x_{j,1}$ is adjacent to $c_{i,0}$,
then let one edge of $c_{i,k}x_{j,0},c_{i,k}c_{i,0}$
be in $S$ for each $i\in [m], j\in[n],k\in\{1,2,3\}$.
If the vertex $x_{j,1}$ is adjacent to $c_{i,k}$,
then let the edge $x_{j,0}c_{i,0}$ be in $S$ for each
$i\in [m], j\in[n], k\in\{1,2,3\}$.
Now we verify that $S$ is indeed a rainbow minimum $s$-$t$-edge-cut.
First, we can verify that $|S|=6m$ and it is a minimum $s$-$t$-edge-cut.
In fact, if a literal of $c_i$ is false,
then one edge colored with $r_i^4$ or $r_i^5$ is in $S$.
Since the three literals of $c_i$ cannot be false
at the same time, we can find a rainbow minimum $s$-$t$-edge-cut in $G_\phi$ under the coloring $f$.
\end{proof}

\section{Concluding remarks}

In this paper we defined a new colored connection parameter
srd-number for connected graphs. We hope that with this new
parameter, avoiding the drawback of the parameter rd-number,
one could get a colored version of the famous Menger's Min-Max Theorem.
We do not know if this srd-number is actually equal to the
rd-number for every connected graph, and then posed a conjecture to
further study on the two parameters.
The results in the last sections fully
support the conjecture.

\end{document}